 \newtheorem{thm}{Theorem}[section]
 \newtheorem{lem}[thm]{Lemma}
 \newtheorem{prop}[thm]{Proposition}
 \newtheorem{dfn}[thm]{Definition}
 \newtheorem{rmk}[thm]{Remark}
 \newtheorem*{thmA}{Theorem A}
 \newtheorem*{thmB}{Theorem B}
 \theoremstyle{definition}
 \theoremstyle{remark}
 \numberwithin{equation}{section}
\newcommand{\sm}{\left(\begin{smallmatrix}}
\newcommand{\esm}{\end{smallmatrix}\right)}
\newcommand{\mat}{\left(\begin{matrix}}
\newcommand{\emat}{\end{matrix}\right)}
\def\CC{\mathbb{C}}
\def\HH{\mathbb{H}}
\def\QQ{\mathbb{Q}}
\def\ZZ{\mathbb{Z}}
\def\SL{\mathrm{SL}}
\begin{document}

\title[Schneider-Siegel theorem for harmonic Maass forms]{Schneider-Siegel theorem for a family of values of a harmonic weak Maass form at Hecke orbits}


\author{Dohoon Choi}
\author{Subong Lim}

\address{Department of Mathematics, Korea University, 145 Anam-ro, Seongbuk-gu, Seoul 02841, Republic of Korea}
\email{dohoonchoi@korea.ac.kr}

\address{Department of Mathematics Education, Sungkyunkwan University, Jongno-gu, Seoul 03063, Republic of Korea}
\email{subong@skku.edu}

\subjclass[2010]{11F03, 11F25}

\thanks{Keywords: harmonic weak Maass form, CM point, meromorphic differential}

\begin{abstract}
Let $j(z)$ be the modular $j$-invariant function. Let $\tau$ be an algebraic number in the complex upper half plane $\mathbb{H}$.  It was proved by  Schneider and Siegel that if $\tau$ is not a CM point, i.e., $[\mathbb{Q}(\tau):\mathbb{Q}]\neq2$, then $j(\tau)$ is transcendental.
Let $f$ be a harmonic weak Maass form of weight $0$ on $\Gamma_0(N)$.
In this paper, we consider an extension of the results of Schneider and Siegel to a family of values of  $f$   on Hecke orbits of $\tau$. 
%

For a positive integer $m$, let  $T_m$ denote the $m$-th Hecke operator. 
Suppose that the coefficients of the principal part of $f$ at the cusp $i \infty$ are algebraic, and that $f$ has its poles only at cusps equivalent to $i \infty$. 
We prove,  under a mild assumption on $f$,  that  for any fixed $\tau$, if $N$ is a prime such that
$ N\geq 23 \text{ and }  N \not \in \{23, 29, 31, 41, 47, 59, 71\},$
then $f(T_m.\tau)$ are transcendental for infinitely many positive integers $m$ prime to $N$.
\end{abstract}

\maketitle

\section{Introduction} \label{section1}
Let $j(z)$ be the modular $j$-invariant function on the complex upper half plane $\mathbb{H}$. Let $\tau$ be an algebraic number in $\mathbb{H}$. It was proved by Kronecker \cite{Krone} and Weber \cite{Web} that if $\tau$ is a CM point, i.e., $[\mathbb{Q}(\tau):\mathbb{Q}]=2$, then $j(\tau)$ is algebraic. Schneider \cite{Schn} and Siegel \cite{Si}  proved that if $\tau$ is not a CM point, then $j(\tau)$ is transcendental.
By combining these two results, we state the following.

\begin{thmA}[Kronecker,  Schneider, Siegel, Weber]
Assume that $\tau$ is an algebraic number in $\mathbb{H}$.
Then, $\tau$ is a CM point if and only if $j(\tau)$ is algebraic.
\end{thmA}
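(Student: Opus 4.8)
The two directions of Theorem~A are of completely different natures, so the plan is to establish them separately. For the implication ``$\tau$ a CM point $\Rightarrow j(\tau)$ algebraic,'' the most hands-on route uses the classical modular polynomials $\Phi_n(X,Y)\in\mathbb{Z}[X,Y]$, characterized by $\Phi_n(j(\Lambda),j(\Lambda'))=0$ whenever $\Lambda'\subseteq\Lambda$ is a sublattice of index $n$ with cyclic quotient. If $\tau$ is a CM point then $\Lambda:=\mathbb{Z}+\mathbb{Z}\tau$ is, up to homothety, a proper fractional ideal of an order $\mathcal{O}$ in the imaginary quadratic field $K=\mathbb{Q}(\tau)$. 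One chooses a primitive element $\alpha\in\mathcal{O}\setminus\mathbb{Z}$ whose norm $n:=N_{K/\mathbb{Q}}(\alpha)$ is $>1$ and not a perfect square (such $\alpha$ exist): then $\alpha\Lambda\subseteq\Lambda$ has cyclic quotient of order $n$, while $\alpha\Lambda$ and $\Lambda$ lie in the same ideal class, so $j(\alpha\Lambda)=j(\Lambda)=j(\tau)$ and hence $\Phi_n(j(\tau),j(\tau))=0$. Since $\Phi_n(X,X)$ is then a non-constant element of $\mathbb{Z}[X]$, $j(\tau)$ is an algebraic number (indeed an algebraic integer). Conceptually this is a slice of the Kronecker--Weber theory of complex multiplication: the finitely many CM points of a fixed discriminant have $j$-values permuted by $\mathrm{Gal}(\bar{\mathbb{Q}}/\mathbb{Q})$, whose elementary symmetric functions recover the integral ring class polynomial of that order.

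For the converse implication --- equivalently, ``$\tau$ algebraic and not a CM point $\Rightarrow j(\tau)$ transcendental'' --- the plan is to invoke Schneider's theorem, whose proof belongs to transcendental number theory rather than to algebra. In outline: if $\tau$ is algebraic and $j(\tau)\in\bar{\mathbb{Q}}$, then $E_\tau=\mathbb{C}/(\mathbb{Z}+\mathbb{Z}\tau)$ has a Weierstrass model over $\bar{\mathbb{Q}}$, so there is a period $\Omega$ with $g_2(\Omega\Lambda),g_3(\Omega\Lambda)\in\bar{\mathbb{Q}}$; one then confronts the algebraic values thereby forced on the associated Weierstrass (and related exponential) functions with the Schneider--Lang criterion for meromorphic functions of finite order whose $\bar{\mathbb{Q}}$-algebra is stable under $d/dz$. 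Unless $E_\tau$ admits extra endomorphisms the finiteness conclusion of that criterion is contradicted, so $E_\tau$ has complex multiplication, which is precisely the condition $[\mathbb{Q}(\tau):\mathbb{Q}]=2$. Siegel's contribution reaches the same conclusion through a different transcendence argument. For the present paper the plan is simply to quote this as Theorem~A, attributing the forward direction to \cite{Krone,Web} and the reverse to \cite{Schn,Si}.

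The \emph{main obstacle} is concentrated entirely in the second implication: it cannot be reached by algebraic manipulation but requires the full transcendence apparatus --- auxiliary function constructions, zero and growth estimates, extrapolation, and the height bounds underlying the Schneider--Lang criterion --- which is why Theorem~A is recorded as a cited result rather than reproved. None of the work specific to this paper touches those proofs; it consists instead in transporting Theorem~A along the Hecke correspondences so as to control the family of values $f(T_m.\tau)$.
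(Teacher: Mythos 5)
Your treatment matches the paper exactly: the paper gives no proof of Theorem~A, citing Kronecker \cite{Krone} and Weber \cite{Web} for the CM direction and Schneider \cite{Schn} and Siegel \cite{Si} for the transcendence direction, which is precisely what you propose to do. Your accompanying sketches of the classical arguments (the modular-polynomial/ring-class-field argument for ``CM $\Rightarrow j(\tau)$ algebraic'' and the Schneider--Lang-type transcendence argument for the converse) are accurate in outline, so there is nothing to correct.
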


Let $m$ be a positive integer, and $T_m$ denote the $m$-th Hecke operator. The operators $T_m$ act on both of modular forms $f$ and divisors $D$ of a modular curve, and they are denoted by $f|T_m$ and $T_m.D$, respectively.
Then, $j(T_m.\tau)=(j|T_{m})(\tau)$, and $(j|T_{m})(z)$ is a polynomial of $j(z)$ with rational coefficients. Thus, $j(T_m.\tau)$ is algebraic for every $m$ if and only if $j(\tau)$ is algebraic. Therefore, Theorem A is equivalent to the following theorem.

\begin{thmB}[Kronecker, Schneider,  Siegel, Weber]
Assume that $\tau$ is an algebraic number in $\mathbb{H}$.
Then, $\tau$ is a CM point if and only if $j(T_m.\tau)$ is algebraic for every positive integer $m$ .
\end{thmB}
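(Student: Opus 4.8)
The plan is to obtain Theorem B as a reformulation of Theorem A, using the classical fact recalled just above that $j|T_m$ is a polynomial in $j$ with rational coefficients. Concretely, for a positive integer $m$ one has the explicit formula
\[
(j|T_m)(z) = \sum_{\substack{ad=m\\ 0 \le b < d}} j\!\left(\frac{az+b}{d}\right),
\]
and the right-hand side, being a symmetric function of the conjugate values $j\big(\tfrac{az+b}{d}\big)$, equals $P_m(j(z))$ for a polynomial $P_m \in \ZZ[X]$; this is precisely the theory of the modular equation $\Phi_m(X, j) = 0$, for which I would simply cite a standard reference. Granting this, both directions of the asserted equivalence are immediate.

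For the forward implication, assume $\tau$ is a CM point. By the Kronecker--Weber part of Theorem A, $j(\tau)$ is algebraic; hence so is $P_m(j(\tau)) = (j|T_m)(\tau) = j(T_m.\tau)$ for every positive integer $m$, since $P_m$ has rational coefficients and a polynomial with algebraic coefficients takes algebraic values at algebraic arguments. For the converse, assume $j(T_m.\tau)$ is algebraic for all $m$; taking $m=1$, so that $T_1$ is the identity, shows that $j(\tau)$ itself is algebraic. If $\tau$ were not a CM point, the Schneider--Siegel part of Theorem A would force $j(\tau)$ to be transcendental, a contradiction; hence $\tau$ is a CM point.

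The argument is entirely formal once Theorem A is available, so there is no real obstacle here: the only ingredient that is not completely immediate is the rationality (indeed integrality) of the coefficients of $j|T_m$ regarded as a polynomial in $j$, and even that is classical. It is perhaps worth emphasizing the asymmetry that makes the two theorems equivalent: the converse in Theorem B needs only the single value at $m=1$, whereas the forward direction uses that algebraicity of the whole family $\{j(T_m.\tau)\}_{m\ge 1}$ follows automatically from algebraicity of the one value $j(\tau)$. This same mechanism is what will break down for a general harmonic weak Maass form $f$, where $f|T_m$ is no longer a polynomial in a single algebraic quantity, and is the reason the main theorem of the paper requires a genuinely different argument.
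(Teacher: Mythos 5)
Your argument is correct and is essentially the same as the paper's: the paper also deduces Theorem B from Theorem A via the fact that $(j|T_m)(z)$ is a polynomial in $j(z)$ with rational coefficients, so that algebraicity of $j(T_m.\tau)$ for all $m$ is equivalent to algebraicity of $j(\tau)$ (the converse direction reducing to $m=1$). Your additional remarks on the modular equation and on why this mechanism fails for general harmonic weak Maass forms are accurate but not needed beyond what the paper already uses.
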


In this vein, we consider an extension of the results of Kronecker,  Schneider, Siegel, and Weber to a family of values of  a harmonic weak Maass form $f$ on Hecke orbits of $\tau$. Let $N$ be a positive integer, and $f$ be a harmonic weak Maass form of weight $0$ on $\Gamma_0(N)$. In contrast to the case for the $j$-invariant function, the value of $f$ at a CM point $\tau$ is not algebraic in general. Thus, first we obtain the period of $f(\tau)$ for a CM point $\tau$, which is expressed as the regularized Petersson inner product of a cusp form and a meromorphic modular form. Next, by using this result, we obtain an extension of the results of Kronecker,  Schneider, Siegel, and Weber to a family of values of $f$ on Hecke orbits of $\tau$.

Let $Y_0(N)$ be the modular curve of level $N$ defined by $\Gamma_0(N)\backslash\mathbb{H}$, and $X_0(N)$ denote the compactification of $Y_0(N)$ by adjoining the cusps. Let us note that $X_0(N)$ is a curve defined over $\mathbb{Q}$. We fix an algebraic closure $\overline{\mathbb{Q}}$ of $\mathbb{Q}$. Let $K$ be a subfield of $\overline{\mathbb{Q}}$ or the field  $\CC$ of complex numbers. Let $C$ be a curve defined over $K$.
For an extension $E$ of $K$, we denote by $\mathrm{Div}_C(E)$ the group of divisors of $C$ defined over $E$.
Let $f$ be a function on $C(\mathbb{C})\setminus S$ for a finite subset $S$ of $C(\mathbb{C})$.
If $D=\sum_{P \in C} n_P P\in \mathrm{Div}_C(\mathbb{C})$ and the support of $D$ does not contain any point in $S$, then we define
\[
f(D):=\sum n_P f(P).
\]
The $m$-th Hecke operator $T_m$ acts on $\mathrm{Div}_{X_0(N)}(\mathbb{C})$, and it is denoted by $T_m.D$ for $D\in \mathrm{Div}_{X_0(N)}(\mathbb{C})$.

Let $k$ be a non-negative even integer.
Let $S_{k}(\Gamma_0(N))$ denote the space of cusp forms of weight $k$ on $\Gamma_0(N)$.
We denote by $H_{k}(\Gamma_0(N))$ the space of harmonic weak Maass forms of weight $k$  on $\Gamma_0(N)$.
For the differential operator $\xi_{-k}$ defined by  $\xi_{-k}(f)(z) := 2iy^{-k}\overline{\frac{\partial }{\partial\bar{z}}f(z)}$,  the assignment $f(z)\mapsto \xi_{-k}(f)(z)$ gives an anti-linear mapping
\begin{equation} \label{xi}
\xi_{-k}: H_{-k}(\Gamma_0(N)) \to M^!_{k+2}(\Gamma_0(N)),
\end{equation}
where $M^!_{k}(\Gamma_0(N))$ denotes the space of weakly holomorphic modular forms of weight $k$ on $\Gamma_0(N)$.
Here, $y$ denotes the imaginary part of $z\in \HH$.
Let $H^*_{-k}(\Gamma_0(N))$ be the inverse image of the space $S_{k+2}(\Gamma_0(N))$  of cusp forms under the mapping $\xi_{-k}$.

\begin{dfn}
Let $f$ be a harmonic weak Maass form of weight $0$ on $\Gamma_0(N)$. We say that $f$ is {\it arithmetic} if $f$ satisfies the following conditions:
\begin{enumerate}
\item the principal part of $f$ at the cusp $i \infty$ belongs to $\overline{\mathbb{Q}}[q^{-1}]$, and its constant term is zero,
\item the principal part of $f$ at each cusp not equivalent to $i \infty$ is constant.
\end{enumerate}
Here, $q := e^{2\pi iz}$ for a complex number $z\in \HH$.
\end{dfn}

This definition is similar with that of being good  in \cite{BOR}; however, the conditions in this definition are weaker than those in the definition of being good. Furthermore, a harmonic weak Maass form $f$ is called an {\it arithmetic Hecke eigenform} if $f$ is arithmetic and $ \xi_{0}(f)$ is a Hecke eigenform.

For $\tau \in \mathbb{H} \cup \{ i \infty \} \cup \mathbb{Q}$, let $Q_{\tau}$ be the image of $\tau$ under the canonical map from $ \mathbb{H} \cup \{ i \infty \} \cup \mathbb{Q}$ to $X_0(N)$ and
\begin{equation*} \label{D_tau}
D_{\tau}:=Q_{i \infty}-Q_{\tau}\in \mathrm{Div}_{X_0(N)}(\mathbb{C}).
\end{equation*}
If $\tau$ is a CM point, then $D_{\tau}$
is defined over $\overline{\mathbb{Q}}$.
Thus, there exists a differential $\psi_{D_\tau}^{alg}$ of the third kind associated to $D_{\tau}$ such that $\psi_{D_\tau}^{alg}$ is defined over $\overline{\mathbb{Q}}$.
Note that $\psi_{D_\tau}^{alg}$ can be written as $\psi_{D_\tau}^{alg} = 2\pi if_{\psi_{D_\tau}^{alg}}(z)dz$ for some meromorphic modular form $f_{\psi_{D_\tau}^{alg}}$ of weight $2$ on $\Gamma_0(N)$ (see Section \ref{Differentials} for details). Let $\left( \xi_{0}(f), f_{\psi_{D_\tau}^{alg}} \right)_{reg}$ be the regularized Petersson inner product of $\xi_{0}(f)$ and $f_{\psi_{D_\tau}^{alg}}$ (see Section \ref{Regularized} for the definition of the regularized Petersson inner product). The following theorem shows that, for each positive integer $m$ prime to $N$, the period of $f(T_m.Q_{\tau})$ can be expressed as the multiplication of $\left( \xi_{0}(f), f_{\psi_{D_\tau}^{alg}} \right)_{reg}$ and the eigenvalue of $\xi_{0}(f)$ for $T_m$.

\begin{thm} \label{main0}
Let $N$ be a prime, and  $f$ be a harmonic weak Maass form of weight $0$ on $\Gamma_0(N)$.
Assume that $f$ is an arithmetic Hecke eigenform, and that $\tau$ is a CM point.
Let $\psi_{D_\tau}^{alg} := 2\pi i f_{\psi_{D_\tau}^{alg}}(z)dz$ be a differential of the third kind associated to $D_\tau$ defined over $\overline{\mathbb{Q}}$. Then,
\[
f(T_m.Q_{\tau})-m^{-1}\lambda_m \left(\xi_{0}(f), f_{\psi_{D_\tau}^{alg}} \right)_{reg}
\]
is algebraic for every positive integer $m$ prime to $N$, where $\lambda_m$ is the eigenvalue of $\xi_0(f)$ for $T_m$.
\end{thm}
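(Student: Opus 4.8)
The plan is to bootstrap from the case $m=1$. That case is precisely the single‑point period formula --- for an arithmetic Hecke eigenform $f$ and a CM point $\tau$ the quantity $f(Q_\tau)-\bigl(\xi_0(f),f_{\psi_{D_\tau}^{alg}}\bigr)_{reg}$ is algebraic --- which, as promised in the introduction, is established earlier in the paper. The key point is that, \emph{with the differential $\psi_{D_\tau}^{alg}$ held fixed}, one should not apply that formula to $f$ at the individual points of the Hecke orbit $T_m.Q_\tau$: doing so would replace $\psi_{D_\tau}^{alg}$ by a third‑kind differential attached to $T_m.D_\tau$, and the difference between such a differential and the Hecke translate $T_m\psi_{D_\tau}^{alg}$ is an uncontrolled holomorphic differential whose (regularized) Petersson pairing with $\xi_0(f)$ is a transcendental Petersson period. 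Instead one applies the single‑point formula to the \emph{Hecke translate} $T_m f$ evaluated at the single CM point $\tau$; since $F(T_m.Q_\tau)=(F|T_m)(\tau)$ for weight‑$0$ forms $F$ (as recorded in the introduction for $j$), this is exactly what is required.

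Two ingredients make this go through. First, $T_m f$ is again an arithmetic harmonic weak Maass form of weight $0$ on $\Gamma_0(N)$: because $N$ is prime and $\gcd(m,N)=1$, the $q$‑expansion formula for $T_m$ carries the principal part of $f$ at $i\infty$ into $\overline{\mathbb{Q}}[q^{-1}]$ and sends its constant coefficient $c^{+}(0)$ to $\sigma_{-1}(m)\,c^{+}(0)=0$, while $T_m f$ stays bounded at the other cusp $Q_0$ (each Hecke representative $\left(\begin{smallmatrix} a&b\\0&d\end{smallmatrix}\right)$ with $d\mid m$ carries a neighbourhood of $Q_0$ into a neighbourhood of a cusp $\Gamma_0(N)$‑equivalent to $Q_0$, and $f$ is bounded there --- its principal part at $Q_0$ is constant, and, as $\xi_0(f)$ is cuspidal, its non‑holomorphic part is a convergent period integral and hence bounded at every cusp). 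Second, $\xi_0(T_m f)=m^{-1}\lambda_m\,\xi_0(f)$: this is the standard intertwining $\xi_0\circ T_m=m^{-1}\,T_m\circ\xi_0$ of the $\xi$‑operator with the Hecke operators (a short computation using the chain rule for $\partial_{\bar z}$ applied to $F\mapsto F(\tfrac{az+b}{d})$ and the matching of the weight‑$0$ and weight‑$2$ slash normalizations), together with the hypothesis that $\xi_0(f)$ is a $T_m$‑eigenform with eigenvalue $\lambda_m$. In particular $\xi_0(T_m f)$ is a scalar multiple of the eigenform $\xi_0(f)$, so $T_m f$ is itself an arithmetic Hecke eigenform and the single‑point formula applies to it: $(T_m f)(Q_\tau)-\bigl(\xi_0(T_m f),f_{\psi_{D_\tau}^{alg}}\bigr)_{reg}\in\overline{\mathbb{Q}}$. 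Substituting $\xi_0(T_m f)=m^{-1}\lambda_m\xi_0(f)$ and pulling the real algebraic scalar $m^{-1}\lambda_m$ out of the inner product gives
\[
(T_m f)(Q_\tau)-m^{-1}\lambda_m\bigl(\xi_0(f),f_{\psi_{D_\tau}^{alg}}\bigr)_{reg}\in\overline{\mathbb{Q}},
\]
and since $(T_m f)(Q_\tau)=f(T_m.Q_\tau)$ by the compatibility of the two actions of $T_m$, this is the assertion.

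The genuinely delicate work lies not in this reduction --- which is essentially Hecke bookkeeping --- but in the single‑point formula it rests on, and if one wished to reprove that here the mechanism is a regularized Stokes argument. Since $\xi_0(f)=g$, the $2$‑form $g(z)\overline{f_{\psi_{D_\tau}^{alg}}(z)}\,dx\,dy$ equals, up to an explicit constant, the exterior derivative of the $\Gamma_0(N)$‑invariant real‑analytic $1$‑form $\overline{f(z)}\,\overline{f_{\psi_{D_\tau}^{alg}}(z)}\,d\bar z$ away from $Q_\tau$ and $Q_{i\infty}$. Integrating over a fundamental domain with small neighbourhoods of $Q_\tau$, $Q_{i\infty}$ and $Q_0$ removed, the side contributions cancel by modularity; the boundary term at $Q_\tau$ is an algebraic multiple of $f(\tau)$ (the residue of $\psi_{D_\tau}^{alg}$ there being algebraic, as the differential is defined over $\overline{\mathbb{Q}}$); the term at $Q_0$ vanishes (there $\psi_{D_\tau}^{alg}$ is holomorphic, forcing $f_{\psi_{D_\tau}^{alg}}$ to vanish, while $f$ is bounded); and the term at $Q_{i\infty}$ collapses to a finite pairing of the algebraic principal‑part coefficients of $f$ with the algebraic $q$‑coefficients of $f_{\psi_{D_\tau}^{alg}}$ --- here the hypotheses $c^{+}(0)=0$ and "$\xi_0(f)$ cuspidal" are exactly what keeps, respectively, the a priori transcendental non‑principal coefficients of $f$ and the non‑holomorphic part of $f$ from contributing. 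Collecting terms gives the single‑point formula; the two technical hurdles there are making the analysis at $Q_{i\infty}$ precise and producing the differential $\psi_{D_\tau}^{alg}$ defined over $\overline{\mathbb{Q}}$, which exists because $X_0(N)$ and the divisor $D_\tau$ are.
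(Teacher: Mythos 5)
Your argument is essentially the paper's own proof (Lemma \ref{algebraic}): the paper likewise applies the regularized Stokes formula (Lemma \ref{regularized_formula}) to $\xi_0(f|T_m)$ paired against $f_{\psi_{D_\tau}^{alg}}$ (after an Eisenstein-series correction of the constant term at the second cusp, which your observation that $\psi_{D_\tau}^{alg}$ is holomorphic at $Q_0$ makes harmless), uses the $q$-expansion principle for the algebraicity of the coefficients of $f_{\psi_{D_\tau}^{alg}}$, and then invokes the intertwining relation $\xi_{0}(f|T_m)=m^{-1}\,\xi_0(f)|T_m$ together with $(f|T_m)(\tau)=f(T_m.Q_\tau)$. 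Your repackaging as ``the $m=1$ case applied to $f|T_m$'' is the same computation --- just note that no separate single-point formula is actually proved earlier in the paper, so your concluding Stokes sketch (which matches the paper's Lemma \ref{regularized_formula} and the proof of Lemma \ref{algebraic}) is the step that carries the argument, and it does.
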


Theorem \ref{main0} is applied to study the transcendence of $f(T_m.Q_{\tau})$ for an algebraic number $\tau$ in $\mathbb{H}$.
Then, we have the following theorem concerning to an extension of the above results of Kronecker,  Schneider, Siegel, and Weber to a family of values of $f$ on Hecke orbits of $\tau$.

\begin{thm}\label{main1}
Let $N$ and $f$ be given as in Theorem \ref{main0}.
Assume  that $(g, \xi_{0}(f)) \neq 0$ for each Hecke eigenform $g \in S_2(\Gamma)$. Let $\tau$ be an algebraic number in $\mathbb{H}$.
Then, $f(T_m.Q_\tau)$ is algebraic for every positive integer $m$ prime to $N$ if and only if $\tau$ is a CM point and $n D_\tau$ is rational on $X_0(N)$ for some positive integer $n$.
\end{thm}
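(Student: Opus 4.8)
The plan is to derive both directions from Theorem \ref{main0} together with the classical Theorem A/B, using the hypothesis $(g,\xi_0(f))\neq 0$ to rule out the accidental vanishing of the regularized Petersson inner product $\left(\xi_0(f), f_{\psi_{D_\tau}^{alg}}\right)_{reg}$ when $\tau$ is a CM point that is not torsion in the relevant Jacobian. First I would dispose of the ``if'' direction. Assume $\tau$ is a CM point and $nD_\tau$ is linearly equivalent to a divisor rational on $X_0(N)$ for some $n\geq 1$. Then $nD_\tau$ is a principal divisor plus a $\overline{\mathbb Q}$-rational divisor, so the differential of the third kind $\psi^{alg}_{nD_\tau}$ associated to it can be taken to be $\psi^{alg}_{nD_\tau} = n\,\psi^{alg}_{D_\tau} = d\log(h)$ for a rational function $h$ on $X_0(N)$ defined over $\overline{\mathbb Q}$; in particular it is a differential of the \emph{third kind with no residues off the support}, equivalently an exact logarithmic differential, and so $f_{\psi^{alg}_{nD_\tau}}$ is (up to the $2\pi i$ normalization) the logarithmic derivative $h'/h$. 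Pairing a cusp form against such a logarithmic differential gives zero: more precisely, the regularized Petersson inner product of a cusp form with $d\log h$ vanishes because $\xi_0(f)$ is a genuine (holomorphic) cusp form and $d\log h$ is holomorphic away from a finite set with only simple poles of integer residue, so the regularized integral collapses. Hence $\left(\xi_0(f), f_{\psi^{alg}_{D_\tau}}\right)_{reg} = \tfrac1n\left(\xi_0(f), f_{\psi^{alg}_{nD_\tau}}\right)_{reg} = 0$, and Theorem \ref{main0} then shows $f(T_m.Q_\tau)$ is algebraic for every $m$ prime to $N$.

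For the ``only if'' direction, suppose $f(T_m.Q_\tau)$ is algebraic for every positive integer $m$ prime to $N$. The first step is to show $\tau$ must be a CM point. Writing $\xi_0(f) = \sum_{k} a_k f(T_k.\cdot)$-type relations is not quite right; instead I would use that $f = f^+ + f^-$ with holomorphic part $f^+$ having algebraic principal part, and compare: the holomorphic part contributes, via Theorem A/B applied to $j$ and the function field of $X_0(N)$, an algebraic value exactly when $Q_\tau$ is an algebraic point of $X_0(N)$, i.e.\ when $\tau$ is either CM or has $j(\tau)$ algebraic — but by Schneider--Siegel, $j(\tau)$ algebraic for an algebraic $\tau\in\mathbb H$ forces $\tau$ to be CM. (Concretely: if $\tau$ is not CM then $j(\tau)$ is transcendental, and one shows the algebraicity of infinitely many $f(T_m.Q_\tau)$ would force $j(\tau)$ to satisfy an algebraic relation, a contradiction; this uses that $f$ is nonconstant, which is where the mild assumption on $f$ enters.) So $\tau$ is a CM point. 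Now apply Theorem \ref{main0}: for all $m$ prime to $N$,
\[
f(T_m.Q_\tau) - m^{-1}\lambda_m \left(\xi_0(f), f_{\psi^{alg}_{D_\tau}}\right)_{reg} \in \overline{\mathbb Q}.
\]
Since the left term is algebraic, $m^{-1}\lambda_m \left(\xi_0(f), f_{\psi^{alg}_{D_\tau}}\right)_{reg}\in\overline{\mathbb Q}$ for all such $m$. The $\lambda_m$ are algebraic and not all zero (take $m=1$), so $\left(\xi_0(f), f_{\psi^{alg}_{D_\tau}}\right)_{reg}\in\overline{\mathbb Q}$. The crux is then to show this forces $nD_\tau$ to be rational for some $n$. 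Here I would invoke the transcendence theory of periods of differentials of the third kind on curves over $\overline{\mathbb Q}$ (Waldschmidt, W\"ustholz, and the Scholl--Brylinski circle of results): the regularized Petersson inner product of a cusp form $g=\xi_0(f)$ with $f_{\psi^{alg}_{D_\tau}}$ unwinds, via Stokes/the Bruinier--Funke pairing, to a period of the form $\int_\gamma g(z)\,\overline{\psi^{alg}_{D_\tau}}$ or to $\sum_i c_i \log(\text{algebraic})$ plus a $\overline{\mathbb Q}$-combination of elliptic integrals attached to the Jacobian of $X_0(N)$; by the analytic subgroup theorem such a quantity is algebraic only if it is zero, and it is zero precisely when the class of $D_\tau$ is torsion in $J_0(N)(\overline{\mathbb Q})$ modulo the span of the cuspidal part — equivalently when $nD_\tau$ is rational on $X_0(N)$ for some $n$. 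The hypothesis $(g,\xi_0(f))\neq 0$ for every Hecke eigenform $g\in S_2(\Gamma_0(N))$ guarantees that no component of $\xi_0(f)$ is orthogonal to the relevant piece, so the vanishing of the pairing genuinely detects torsion of $D_\tau$ rather than being forced by a degenerate projection.

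The main obstacle I anticipate is precisely this last transcendence step: translating ``$\left(\xi_0(f), f_{\psi^{alg}_{D_\tau}}\right)_{reg}\in\overline{\mathbb Q}$'' into ``$nD_\tau$ rational.'' One must (i) regularize the Petersson integral carefully since $f_{\psi^{alg}_{D_\tau}}$ is only meromorphic (it has simple poles at $Q_{i\infty}$ and $Q_\tau$), (ii) express the regularized pairing as an honest period — I expect this goes through the pairing between $H^1_{dR}$ and $H_1$ of $X_0(N)$, identifying the pairing with the image of $D_\tau$ under a logarithm map $J_0(N)(\overline{\mathbb Q})\otimes\mathbb R \to \mathbb C$ paired with the class $[g]$ — and (iii) apply W\"ustholz's analytic subgroup theorem (or the Scholl--Brylinski theorem on transcendence of periods of the third kind) to conclude that an algebraic value of this logarithm forces $D_\tau$ into the torsion subgroup up to the cuspidal span. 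The primality of $N$ and the numerical exclusion of the small values in the abstract's list are what make $J_0(N)$ have the structure (no CM factors of the wrong type, cuspidal group behaving well) needed for steps (ii)–(iii) to go through cleanly; these constraints should be invoked at exactly this point rather than earlier.
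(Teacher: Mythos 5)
Your ``if'' direction is essentially the paper's: when $nD_\tau=\mathrm{div}(h)$ with $h$ defined over $\overline{\mathbb{Q}}$, one may take $\psi^{alg}_{D_\tau}=\tfrac1n d\log h$, which is precisely the \emph{canonical} differential of the third kind, and its regularized pairing with $\xi_0(f)$ vanishes; combined with Theorem \ref{main0} this gives algebraicity. But your stated reason for the vanishing (``simple poles with integer residues, so the regularized integral collapses'') is not a proof: every differential of the third kind has simple poles with integer residues, including a generic $\psi^{alg}_{D_\tau}$, whose pairing with $\xi_0(f)$ is exactly the transcendental period appearing in Theorem \ref{main0}. The vanishing uses the logarithmic potential ($\partial_z$ of a real harmonic function) and a Stokes argument, i.e.\ Proposition \ref{regpetersson}.

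The genuine gaps are in the ``only if'' direction, in both cases. (a) Non-CM case: Theorem \ref{main0} is unavailable (it assumes $\tau$ CM), and your claim that algebraicity of the values $f(T_m.Q_\tau)$ would ``force $j(\tau)$ to satisfy an algebraic relation'' has no support: $f$ is a harmonic weak Maass form, not an algebraic function of $j$, so Theorem A/B cannot be applied to its values directly. The paper instead writes $f(T_m.Q_\tau)=-\sum_{n\ge 1}a_f(-n)c_{\Phi_{D_\tau}}(mn)$ via Lemma \ref{regularized_formula} and Proposition \ref{regpetersson}, and then proves Lemma \ref{nonCM}: after a $U_p,V_p$ sieve removing coefficients with $(m,N\cdot\mathrm{Prin}(f))>1$, the resulting form still has a singularity at a non-CM algebraic point, so by Schneider--Siegel it is not defined over $\overline{\mathbb{Q}}$, and the $q$-expansion principle yields infinitely many transcendental coefficients. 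Nothing in your sketch replaces this. (b) CM but non-torsion case: you reduce to ``$\left(\xi_0(f), f_{\psi^{alg}_{D_\tau}}\right)_{reg}$ algebraic $\Rightarrow$ $nD_\tau$ principal'' and defer it to W\"ustholz/Scholl-type period transcendence, which you do not carry out; as stated this is doubtful, because writing $f_{F_{D_\tau}}=\sum_i\beta_i f_i$ in Hecke eigenforms the quantity in question is $m^{-1}\sum_i\alpha_i\overline{\beta_i}(f_i,f_i)\lambda_{i,m}$, and transcendental contributions from different eigencomponents could cancel for any single $m$. The paper's only transcendence input is Scholl's Theorem \ref{algebraic_differential} (non-torsion $\Rightarrow$ $\Phi_{D_\tau}$ not defined over $\overline{\mathbb{Q}}$, hence some $\beta_i$ transcendental); the possible cancellation is then excluded not by period transcendence but by a Hecke-eigenvalue argument over \emph{all} $m$ prime to $N$: if the relevant linear forms vanished for all such $m$, one would get a nonzero $g\in S_2(\Gamma_0(N))$ with $a_g(m)=0$ for $(m,N)=1$, hence $g|U_N\in S_2(\Gamma_0(1))=\{0\}$, a contradiction (Lemma \ref{notalgebraic}). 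This is where the primality of $N$ and the hypothesis $(g,\xi_0(f))\neq0$ actually enter (the latter ensures $\alpha_i(f_i,f_i)$ is a nonzero algebraic number, so transcendence of $\beta_i$ survives). Your assertion that $N\ge 23$ and the excluded list $\{23,29,31,41,47,59,71\}$ are needed at this point is also misplaced: those hypotheses belong to Theorem \ref{main2}, via Baker's theorem on torsion points, not to Theorem \ref{main1}.
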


\begin{rmk}
Assume that  $f$ and $\tau$ are given as in Theorem \ref{main1}.
In fact, if there is a positive integer $m$ prime to $N$ such that $f(T_m . Q_\tau)$ is transcendental, then
there are infinitely many such positive integers $m$ prime to $N$ (see the proof of Theorem \ref{main1} in Section \ref{Proof}).
\end{rmk}

Let $J_{\Gamma_0(N)}$ be the Jacobian variety of the $X_0(N)$ defined over $\mathbb{Q}$. Then, an Albanese embedding $i_{Q_{i\infty}}:X_0(N) \rightarrow J_{\Gamma_0(N)}$ can be defined by sending $Q$ to  $Q_{i \infty}-Q$. Let us note that $m(Q_{i \infty}-Q)$  is rational on $X_{0}(N)$ for some positive integer $m$ if and only if $i_{Q_{i\infty}}(Q)$ is a torsion point in $J_{\Gamma_0(N)}$.
Let
\[
T_{Q_{i \infty}}(X_{0}(N)):=\left\{ Q \in X_{0}(N)(\overline{\mathbb{Q}}) \; \big| \; i_{Q_{i\infty}}(Q) \text{ is a torsion point in } J_{\Gamma_0(N)}\right\}.
\]
If the genus of $X_{0}(N)$ is larger than or equal to $2$, then, by the Mumford-Manin conjecture (proved by Raynaud), $T_{Q_{i \infty}}(X_{0}(N))$ is a finite set.

Let $X^+_0(N)$ denote the quotient of $X_0(N)$ by the Atkin-Lehner involution $w_N$.
For primes $N$, Coleman, Kaskel, and Ribet \cite{CKR} conjectured the following statement:
for all prime numbers $N \geq  23$,
\[
T_{Q_{i \infty}}(X_{0}(N)) =
\left \{
\begin{array}{ll}
  \{0,i \infty\} & \text{ if } g^+ >0, \\
  \{0,i \infty\}\cup \{\text{hyperelliptic branch points}\} & \text{ if } g^+=0,
\end{array} \right.
\]
where $g^+$ denotes the genus of $X_0(N)^+$.
Baker \cite{Bak} proved this conjecture.
Furthermore, for $N\geq23$, $g^+$ is zero if and only if $N \in \{23, 29, 31, 41, 47, 59, 71\}$.
Thanks to these results on torsion points on the Jacobian of a modular curve,
we obtain the following theorem from Theorem \ref{main1}.

\begin{thm} \label{main2}
Under the assumption as in Theorem \ref{main1}, assume that $$N\geq 23 \text{ and }  N \not \in \{23, 29, 31, 41, 47, 59, 71\}.$$
Then, $f(T_m.Q_\tau)$ are transcendental for infinitely many positive integers $m$ prime to $N$.
\end{thm}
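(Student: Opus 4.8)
The plan is to derive Theorem \ref{main2} as a formal consequence of Theorem \ref{main1}, the description of the torsion set $T_{Q_{i\infty}}(X_0(N))$ due to Baker, and the elementary fact that a CM point of $\mathbb{H}$ is never a cusp of $X_0(N)$. By the Remark following Theorem \ref{main1}, it is enough to produce a single positive integer $m$ prime to $N$ with $f(T_m.Q_\tau)$ transcendental; the existence of infinitely many then follows automatically. By Theorem \ref{main1}, such an $m$ fails to exist only if $\tau$ is a CM point and $nD_\tau$ is rational on $X_0(N)$ for some positive integer $n$. Hence it suffices to rule out this configuration; that is, to show that if $\tau$ is a CM point, then $nD_\tau$ is not rational on $X_0(N)$ for any positive integer $n$.

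So I would assume $\tau$ is a CM point, so that $Q_\tau \in X_0(N)(\overline{\mathbb{Q}})$ and $D_\tau = Q_{i\infty} - Q_\tau = i_{Q_{i\infty}}(Q_\tau)$ under the Albanese embedding. As recalled in the Introduction, the existence of $n \geq 1$ with $nD_\tau$ rational on $X_0(N)$ is equivalent to $i_{Q_{i\infty}}(Q_\tau)$ being a torsion point of $J_{\Gamma_0(N)}$, i.e. to $Q_\tau \in T_{Q_{i\infty}}(X_0(N))$. So it is enough to show $Q_\tau \notin T_{Q_{i\infty}}(X_0(N))$.

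This is where the hypotheses on $N$ enter. Since $N \geq 23$ is prime, $X_0(N)$ has genus at least $2$, so Raynaud's theorem applies and $T_{Q_{i\infty}}(X_0(N))$ is finite; and since $N \notin \{23, 29, 31, 41, 47, 59, 71\}$, the quotient curve $X_0(N)^+$ has positive genus $g^+$. By Baker's theorem (the Coleman--Kaskel--Ribet conjecture) this gives $T_{Q_{i\infty}}(X_0(N)) = \{Q_0, Q_{i\infty}\}$, the two cusps of the prime-level curve $X_0(N)$. But $\tau \in \mathbb{H}$, so $Q_\tau$ is the image of an interior point and is not a cusp; in particular $Q_\tau \notin \{Q_0, Q_{i\infty}\}$. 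Hence $Q_\tau \notin T_{Q_{i\infty}}(X_0(N))$, as needed. Combining this with the two reductions above completes the proof: some $m$ prime to $N$ makes $f(T_m.Q_\tau)$ transcendental, and by the Remark infinitely many do.

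I do not anticipate a genuine obstacle here: all of the analytic and arithmetic substance is carried by Theorems \ref{main0} and \ref{main1}, and the only further input is Baker's explicit computation of $T_{Q_{i\infty}}(X_0(N))$ together with the classical fact that $g^+ = 0$ precisely for the primes in the excluded list. The one point worth double-checking is that the hypothesis ``$N \geq 23$ and $N \notin \{23, 29, 31, 41, 47, 59, 71\}$'' is exactly what forces $g^+ > 0$, so that the torsion set reduces to the set of cusps and the CM point $Q_\tau$ is compelled to avoid it.
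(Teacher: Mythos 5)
Your proposal is correct and is essentially the paper's own (implicit) argument: the authors deduce Theorem \ref{main2} from Theorem \ref{main1} exactly by invoking Baker's proof of the Coleman--Kaskel--Ribet conjecture, noting that the hypothesis on $N$ forces $g^+>0$ so that $T_{Q_{i\infty}}(X_0(N))$ consists only of the two cusps, which $Q_\tau$ (the image of a point of $\mathbb{H}$) can never be, and then using the Remark (justified by Lemmas \ref{nonCM} and \ref{notalgebraic}) to pass from one transcendental value to infinitely many. I see no gap.
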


The remainder of this paper is organized as follows.
In Section \ref{Preliminaries}, we introduce some preliminaries for harmonic weak Maass forms, residues of meromorphic differentials on a modular curve, and differentials of the third kind on a complex curve.
In Section \ref{Regularized}, we review  the definition of a regularized Petersson inner product and prove that the regularized Petersson inner product of a meromorphic modular form, associated with a canonical differential of the third kind of some divisor on $X_0(N)$, with every cusp form of weight $2$ on $\Gamma_0(N)$ is zero.
In Section \ref{Proof}, we prove Theorem \ref{main0} and  \ref{main1}.

\section{Preliminaries} \label{Preliminaries}
In this section, we recall definitions and basic facts about harmonic weak Maass forms, residues of meromorphic differentials on a modular curve, and properties for differentials of the third kind on a complex curve.

\subsection{Harmonic weak Maass forms}
For details of harmonic weak Maass forms, we refer to \cite{BF} and \cite{O2}.
Let $k$ be an even integer.
We  recall the weight $k$ slash operator
\[(f|_{k}\gamma)(z) := (cz+d)^{-k}f(\gamma z)\]
for any function $f$ on $\HH$ and $\gamma = \sm a&b\\c&d\esm\in \SL_2(\ZZ)$. Let $\Delta_k$ denote the weight $k$ hyperbolic Laplacian defined by
\[
\Delta_k := -y^2 \left (\frac{\partial^2}{\partial x^2} + \frac{\partial^2}{\partial y^2}\right ) + iky \left (\frac{\partial}{\partial x} + i\frac{\partial}{\partial y} \right ),
\]
where $x$ (resp. $y$) denotes the real (resp. imaginary) part of $z$.
Now, we give the definition of a harmonic weak Maass form.

\begin{dfn}
Let $N$ be a positive integer.
A smooth function $f$ on $\HH$ is a harmonic weak Maass form of weight $k$ on $\Gamma_0(N)$ if
it satisfies the following conditions:
\begin{enumerate}
\item[(1)] $f|_{k}\gamma = f$ for all $\gamma\in\Gamma_0(N)$,
\item[(2)] $\Delta_k f =0$,
\item[(3)] a linear exponential growth condition in terms of $y$  at every cusp of $\Gamma_0(N)$.
\end{enumerate}
We denote by $H_{k}(\Gamma_0(N))$  the space of harmonic weak Maass forms of weight $k$  on $\Gamma_0(N)$.
\end{dfn}

Assume that $t$ is a cusp of $\Gamma_0(N)$. Let $\sigma_{t}\in \mathrm{SL}_2(\mathbb{Z})$ be a matrix such that $\sigma_{t} (i \infty)= t$, and $\Gamma_0(N)_{t}$ denote the stabilizer of cusp $t$ in $\Gamma_0(N)$. We define a positive integer $\alpha_t$ by
\[
\sigma_{t}^{-1} \Gamma_0(N)_{t} \sigma_{t}=\left \{\pm \left(
                                                     \begin{smallmatrix}
                                                       1 & \ell \alpha_t \\
                                                       0 & 1 \\
                                                     \end{smallmatrix}
                                                   \right)
     \; : \; \ell \in \mathbb{Z} \right\}.
\]
Recall that $f|_k \sigma_t$  has the Fourier expansion of the form $f|_k \sigma_t =f^+_t+f^-_t$, where
\begin{align} \label{decomposition}
f^+_t(z) &= \sum_{n\gg-\infty} a_f^t(n) e^{2\pi inz/\alpha_t},\\
\nonumber f^-_t(z) &= b^t_{f}(0)y^{1-k} + \sum_{n\ll \infty\atop n\neq 0} b^t_f(n)  \Gamma(4\pi n y/\alpha_t,-k+1 )e^{2\pi inz/\alpha_t},
\end{align}
where $\Gamma(x,s)$ denotes the incomplete gamma function defined as an analytic continuation of the function $\int_{x}^\infty t^{s-1}e^{-t}dt$.
This Fourier series is called {\it the Fourier expansion of $f$ at a cusp $t$}.
The function $\sum_{n\leq 0} a^t_f(n)e^{2\pi inz/\alpha_t}$ is called {\it the principal part} of $f$ at the cusp $t$.

For a positive integer $n$, let $T_n$ denote the $n$-th Hecke operator.
Then, the Hecke operator $T_n$ commutes with the differential operator $\xi_{-k}$ in the following way
\begin{equation}\label{Heckedifferentialcommutative}
\xi_{-k}(f|_{-k}T_n) = n^{-k-1}(\xi_{-k}(f)|_{k+2}T_n)
\end{equation}
for a harmonic weak Maass form $f$ of weight $-k$.

\subsection{Residues of a meromorphic differential on $X_0(N)$}
Let $\psi$ be a meromorphic differential on $X_0(N)$. Then, there exists a unique meromorphic modular form $g$ of weight $2$ on $\Gamma_0(N)$ such that $\psi=g(z)dz$. Assume that $t$ is a cusp of $\Gamma_0(N)$. Assume that, for each cusp $t$, $g$ has the Fourier expansion of the form
\[
(g|_{2} {\sigma_t})(z)=\sum a_g^t(n)q^{n/\alpha_t},
\]
where $q := e^{2\pi iz}$ for $z\in\HH$.

For $\tau \in \mathbb{H} \cup \{ i \infty \} \cup \mathbb{Q}$, let $Q_{\tau}$ be the image of $\tau$ under the canonical map from $ \mathbb{H} \cup \{ i \infty \} \cup \mathbb{Q}$ to $X_0(N)$.
Let $\mathrm{Res}_{Q_{\tau}}gdz$ denote the residue of the differential $g(z)dz$ at $Q_{\tau}$ on $X_0(N)$, and $\mathrm{Res}_{\tau}g$ be the residue of $g$ at $\tau$ on $\mathbb{H}$.
We describe $\mathrm{Res}_{Q_z}gdz$ in terms of $\mathrm{Res}_{\tau}g$ as follows.
Let $\mathcal{C}_N$ be the set of inequivalent cusps of $\Gamma_0(N)$. For $\tau \in \mathbb{H}$, let $e_{\tau}$ be the order of the isotropy subgroup of $\mathrm{SL}_2(\mathbb{Z})$ at $\tau$.
Then, we have
\[
\mathrm{Res}_{Q_{\tau}}gdz=
\begin{cases}
  \frac{1}{e_{\tau}}\mathrm{Res}_{\tau}g & \text{ if } \tau \in \mathbb{H}, \\
  \frac{1}{2\pi i}\alpha_{\tau} a^{\tau}_{g}(0) & \text{ if } \tau \in \mathcal{C}_N.
\end{cases}
\]

\subsection{Differentials of the third kind} \label{Differentials}

In this subsection, we review properties for differentials of the third kind on a complex curve. For details, we refer to \cite{BO}, \cite{Gri}, and \cite{Sch}.
A differential of the third kind on $X_0(N)$  means a meromorphic differential on $X_0(N)$ such that its poles are simple and its residues are integers.
Let $\phi$ be a differential of the third kind on $X_0(N)$ such that $\phi$ has a pole at $P_j$ with residue $m_j$ and is holomorphic elsewhere.

Then, we define a linear map $res$ for the space of differentials of the third kind on $X_0(N)$ to $\mathrm{Div}_{X_0(N)}(\CC)$ by
\[
res(\phi)=\sum_{j}m_j P_j.
\]
The image of $\phi$ under the map $res$ is called the residue divisor of $\phi$.
By the residue theorem, the residue divisor $res(\phi)$ has degree zero.

Conversely, if $D$ is a divisor on $X_0(N)$ whose degree is zero, then there is a differential $\psi_D$ of the third kind with $res(\psi_D) = D$
by the Riemann-Roch theorem and the Serre duality.
The differential $\psi_D$ is unique up to addition of a cusp form of weight $2$ on $\Gamma_0(N)$.

Let $D$ be a divisor of $X_0(N)$ with degree zero.
Then, there is a unique differential $\Phi_D$ of the third kind such that $res(\Phi_D)=D$ and $\Phi_D=\partial_z h$, where $h$ is a real harmonic function on $X_0(N)$ with some singularities. Here, the differential $\Phi_D$ of the third kind is called {\it the canonical differential of the third kind associated to $D$}
(for example, see Section 1 in \cite{Sch} for more details). Scholl proved the following theorem by using  Waldschmidt's result on the transcendence of periods of differentials of the third kind.

\begin{thm}\cite[Theorem 1]{Sch} \label{algebraic_differential}
With the above notation, assume that $D$ is defined over a number field $F$. Then, $\Phi_D$ is defined over $\overline{\mathbb{Q}}$ if and only if some non-zero multiple of $D$ is a principal divisor.
\end{thm}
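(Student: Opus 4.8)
The plan is to prove both implications from the characterization of $\Phi_D$ as the \emph{unique} differential of the third kind with $res(\Phi_D)=D$ all of whose periods are purely imaginary. Indeed, writing $\Phi_D=\partial_z h$ with $h$ real and single-valued away from $\mathrm{supp}(D)$, one has $dh=\Phi_D+\overline{\Phi_D}$, so $\oint_\gamma\Phi_D+\overline{\oint_\gamma\Phi_D}=\oint_\gamma dh=0$ for every cycle $\gamma$; hence every period of $\Phi_D$ is purely imaginary. Conversely this property pins down $\Phi_D$, since two differentials of the third kind with residue divisor $D$ differ by a holomorphic differential, and a nonzero holomorphic differential cannot have all its periods purely imaginary (otherwise its real part would be an exact harmonic form on the compact curve $X_0(N)$, hence zero).

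For the implication in which a nonzero multiple of $D$ is assumed principal, suppose $nD=\div(g)$ for some nonzero integer $n$ and some rational function $g$ on $X_0(N)$, which may be taken to be defined over $\overline{\mathbb{Q}}$ because $D$ is defined over the number field $F$. Set $\eta:=\tfrac{1}{n}\,\tfrac{dg}{g}$. I would verify three things: (i) $\eta$ is a differential of the third kind with $res(\eta)=D$, since the poles of $dg/g$ are simple and occur exactly at the zeros and poles of $g$ with residue $\mathrm{ord}_P(g)=n\cdot D_P$; (ii) $\eta$ is defined over $\overline{\mathbb{Q}}$, as $g$ is; and (iii) every period of $\eta$ is purely imaginary, because $\int_\gamma\eta=\tfrac{2\pi i}{n}\,w(\gamma)$, where $w(\gamma)\in\mathbb{Z}$ is the winding number of $g$ along $\gamma$ about the origin. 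By the uniqueness in the characterization above, $\eta=\Phi_D$, so $\Phi_D$ is defined over $\overline{\mathbb{Q}}$.

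For the converse, assume $\Phi_D$ is defined over $\overline{\mathbb{Q}}$; the goal is to conclude that the class of $D$ is a torsion point of $J_{\Gamma_0(N)}$, equivalently that $nD$ is principal for some nonzero $n$. By the Riemann-Roch theorem over $F$ there is a differential of the third kind $\eta_0$, defined over $\overline{\mathbb{Q}}$, with $res(\eta_0)=D$; then $\omega:=\Phi_D-\eta_0$ is a holomorphic differential defined over $\overline{\mathbb{Q}}$, and for each cycle $\gamma$ avoiding $\mathrm{supp}(D)$ one has $\int_\gamma\Phi_D=\int_\gamma\eta_0+\int_\gamma\omega$, with the left-hand side purely imaginary. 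The differentials $\eta_0$ and $\omega$, together with the factor $2\pi i$ entering through the residues, are all algebraic data attached to the generalized Jacobian $J_D$ associated to the modulus supported on $\mathrm{supp}(D)$, a semi-abelian variety over $\overline{\mathbb{Q}}$ that is an extension of $J_{\Gamma_0(N)}$ by a torus; the canonical differential $\Phi_D$ corresponds, via the reciprocity law relating periods of third-kind differentials to the Abel-Jacobi image of $D$, to the logarithm $u\in\mathrm{Lie}(J_D)\otimes\mathbb{C}$ of the algebraic point determined by $D$.

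The main work, and the main obstacle, is the transcendence input, for which I would invoke Waldschmidt's theorem on periods of differentials of the third kind (equivalently, the analytic subgroup theorem applied to $J_D$). The hypothesis that $\Phi_D$ is defined over $\overline{\mathbb{Q}}$ with purely imaginary periods translates into a nontrivial $\overline{\mathbb{Q}}$-linear relation among $u$, the algebraic periods of $\eta_0$ and $\omega$, and $2\pi i$. The transcendence theorem forbids such a relation unless $u$ lies in the Lie algebra of a proper algebraic subgroup of $J_D$; tracing this subgroup through the extension of $J_{\Gamma_0(N)}$ by the torus shows that the image of $u$, that is the Abel-Jacobi image of $D$, lies in $\overline{\mathbb{Q}}\otimes\Lambda$ for the period lattice $\Lambda$ of $J_{\Gamma_0(N)}$, so that $[D]$ is torsion and $nD$ is principal for some nonzero $n$. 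The delicate points are to arrange the period relation so that the hypotheses of the transcendence theorem hold \emph{exactly}, and to extract genuine torsion rather than mere containment in a proper subgroup; this is the analytic heart of \cite{Sch}, which I would follow.
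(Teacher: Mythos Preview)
The paper does not contain its own proof of this statement: Theorem~\ref{algebraic_differential} is quoted verbatim from Scholl \cite[Theorem~1]{Sch}, with only the one-line remark that ``Scholl proved the following theorem by using Waldschmidt's result on the transcendence of periods of differentials of the third kind.'' There is therefore nothing in the present paper to compare your argument against beyond that sentence.

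That said, your sketch is in line with the strategy the paper attributes to Scholl. Your characterization of $\Phi_D$ via purely imaginary periods is correct and is exactly what drives both directions; the easy direction via $\tfrac{1}{n}\,dg/g$ is clean and complete. For the converse you correctly identify Waldschmidt's theorem (equivalently the analytic subgroup theorem applied to the generalized Jacobian $J_D$) as the transcendence input, and you are honest that the passage from ``$u$ lies in a proper algebraic subgroup'' to ``$[D]$ is torsion'' is where the real work lies. If you intend this as a self-contained proof rather than a pointer to \cite{Sch}, that last step needs to be fleshed out: one must show that the purely-imaginary-period condition on an \emph{algebraic} $\Phi_D$ genuinely forces the relevant $\overline{\mathbb{Q}}$-linear relation to be nontrivial, and then that the resulting algebraic subgroup projects to a torsion constraint on $J_{\Gamma_0(N)}$ rather than merely to a proper abelian subvariety. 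As written, your final paragraph is a plausible outline but not yet a proof.
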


For a differential $\psi$ of the third kind, we may write $\psi = 2\pi if(z)dz$, where $f$ is a meromorphic modular form of weight $2$ on $\Gamma_0(N)$.
All poles of $f$ are simple poles and lie on $Y_0(N)$, and their residues are integers.
The residue of $\psi$ at the cusp $t$ is a constant term of the Fourier expansion of $f$ at the cusp $t$.
By the $q$-expansion principle, $\psi$ is defined over a number field $F$ if and only if all Fourier coefficients of $f$ at the cusp $i\infty$ are contained in $F$.
Therefore, the following theorem \cite[Theorem 3.3]{BO} was followed from Theorem \ref{algebraic_differential}.

\begin{thm}\cite[Theorem 3.3]{BO}
Let $F$ be a number field. Let $D$ be a divisor of degree $0$ on $X_0(N)$ defined over $F$.
Let $\Phi_D$ be the
canonical differential of the third kind associated to $D$ and write $\Phi_D = 2 \pi if(z) dz$. If some
non-zero multiple of $D$ is a principal divisor, then all the coefficients $a(n)$ of $f$ at the cusp
$i\infty$ are contained in $F$. Otherwise, there exists an integer $n$ such that $a(n)$ is transcendental.
\end{thm}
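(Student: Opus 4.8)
The plan is to prove the two halves of the dichotomy separately. The second half---that some $a(n)$ is transcendental when no nonzero multiple of $D$ is principal---is the contrapositive of Theorem~\ref{algebraic_differential} read through the $q$-expansion principle, and is essentially immediate. The first half requires slightly more, since Theorem~\ref{algebraic_differential} by itself only yields algebraicity of the $a(n)$, whereas we must place them in $F$; for this I will exhibit $\Phi_D$ explicitly as a logarithmic differential defined over $F$. I note in particular that the first half uses only the construction below, not Theorem~\ref{algebraic_differential}.

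Suppose some nonzero multiple $mD$ is a principal divisor. Because $X_0(N)$ carries the rational cusp $Q_{i\infty}$, we have $X_0(N)(F)\neq\emptyset$, so $\mathrm{Div}^0_{X_0(N)}(F)/\Prin_{X_0(N)}(F)$ is identified with the group $J(F)$ of $F$-rational points of the Jacobian $J$ of $X_0(N)$, and $J(F)\hookrightarrow J(\overline{\mathbb{Q}})$. Since $mD$ is defined over $F$ and becomes principal over $\overline{\mathbb{Q}}$---so its class dies in $J(\overline{\mathbb{Q}})$, hence in $J(F)$---it is already principal over $F$, and I may choose $g\in F(X_0(N))^{\times}$ with $\div(g)=mD$. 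Put $\eta:=\tfrac{1}{m}\tfrac{dg}{g}$. Then $\operatorname{res}(\eta)=\tfrac{1}{m}\div(g)=D$, with integer residues $n_P=\ord_P(g)/m$, so $\eta$ is a differential of the third kind with residue divisor $D$. Moreover $\eta$ is already \emph{canonical}: taking the real harmonic function $h:=\tfrac{2}{m}\log|g|$, which has logarithmic singularities along the support of $D$, one finds $\partial_z h=\tfrac{1}{m}\tfrac{dg}{g}=\eta$, so that $\eta=\partial_z h$ with $h$ real harmonic. By the uniqueness in the definition of the canonical differential this forces $\Phi_D=\eta$. (Equivalently, each period $\int_\gamma\eta\in\tfrac{2\pi i}{m}\mathbb{Z}$ is purely imaginary, which is precisely the defining property of $\Phi_D$.) As $g\in F(X_0(N))^{\times}$, the differential $\eta=\Phi_D$ is defined over $F$, and the $q$-expansion principle quoted above then yields $a(n)\in F$ for every $n$.

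For the second half, assume no nonzero multiple of $D$ is principal. By Theorem~\ref{algebraic_differential}, $\Phi_D$ is then not defined over $\overline{\mathbb{Q}}$. Through the $q$-expansion principle, being defined over $\overline{\mathbb{Q}}$ is equivalent to having all Fourier coefficients $a(n)$ at $i\infty$ algebraic; hence at least one $a(n)$ is not algebraic, i.e.\ is transcendental. This completes the dichotomy.

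The main point---and the only place where anything beyond a direct computation is needed---is the first half, specifically the identification $\Phi_D=\tfrac{1}{m}\tfrac{dg}{g}$. This works because the logarithmic differential of a rational function is automatically canonical: its periods are integer multiples of $2\pi i$ and hence purely imaginary, matching the normalization of $\Phi_D$. The descent of $g$ to $F$ is what upgrades the algebraicity supplied by Theorem~\ref{algebraic_differential} to genuine $F$-rationality, and it is made possible by the rational cusp $Q_{i\infty}$. I note that a direct Galois-descent argument starting from the $\overline{\mathbb{Q}}$-rationality of $\Phi_D$ would be delicate, because the ``purely imaginary periods'' normalization is not stable under $\mathrm{Gal}(\overline{\mathbb{Q}}/F)$; the logarithmic presentation sidesteps this difficulty entirely.
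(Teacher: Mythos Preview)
Your proof is correct, and for the second half it coincides with the paper's approach: apply Theorem~\ref{algebraic_differential} to conclude that $\Phi_D$ is not defined over $\overline{\mathbb{Q}}$, then invoke the $q$-expansion principle.

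For the first half you do more than the paper does. The paper simply asserts that the statement ``was followed from Theorem~\ref{algebraic_differential}'' together with the $q$-expansion principle, but as you correctly observe, Theorem~\ref{algebraic_differential} only yields that $\Phi_D$ is defined over $\overline{\mathbb{Q}}$, not over $F$. Your explicit construction of $\Phi_D$ as $\tfrac{1}{m}\,dg/g$ with $g\in F(X_0(N))^{\times}$---using the rational cusp to descend the principality of $mD$ to $F$, and then recognizing the logarithmic differential as automatically canonical via $h=\tfrac{2}{m}\log|g|$---is exactly the argument that closes this gap. This is in fact the argument given in the cited source \cite{BO}; the present paper merely quotes the result, so your write-up is more complete than the paper's one-sentence justification while remaining faithful to the intended proof.
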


\section{Regularized Petersson inner product} \label{Regularized}
Petersson introduced an inner product on the space of cusp forms, which is called the Petersson inner product.
Borcherds \cite{Bor} used a regularized integral to extend the Petersson inner product to the case that one of two modular forms is a weakly holomorphic modular form.
In this section, we recall the definition of a regularized Petersson inner product of a cusp form and a meromorphic modular form with the same weight by following \cite{Bor} and \cite{C1}.
Furthermore, we prove that if $g$ is a meromorphic modular form on $\Gamma_0(N)$ such that $2\pi i g(z)dz$ is the canonical differential of the third kind associated to some divisor, then the regularized Petersson inner product of $g$ with every cusp form of weight $2$ on $\Gamma_0(N)$ is zero.

Let $g$ be a meromorphic modular form of weight $k$ on $\Gamma_0(N)$.
Let $\mathrm{Sing}(g)$ be the set of singular points of $g$ on $\mathcal{F}_N$, where $\mathcal{F}_N$ denotes the fundamental domain for the action of $\Gamma_0(N)$ on $\HH$.
For a positive real number $\varepsilon$,  an $\varepsilon$-disk $B_{\tau}(\varepsilon)$ at $\tau$ is defined by
\[
B_{\tau}(\varepsilon):=\left\{
\begin{array}{ll}
  \{ z \in \mathbb{H} \; : \; |z-\tau| < \varepsilon \} & \text{if } \tau \in  \mathbb{H},  \\
  \{ z \in \mathcal{F}_N \; : \; \mathrm{Im}(\sigma_{\tau}z) > 1/\varepsilon \} & \text{if } \tau \in  \{ i \infty\} \cup \mathbb{Q}.
\end{array}
\right.
\]
Let $\mathcal{F}_N(g, \varepsilon)$ be a punctured fundamental domain for $\Gamma_0(N)$ defined by
\[
\mathcal{F}_N(g, \varepsilon):=\mathcal{F}_N-\cup_{\tau \in \mathrm{Sing}(g) \cup \mathcal{C}_N} B_{\tau}(\varepsilon).
\]
Let $f$ be a cusp form of weight $k$ on $\Gamma_0(N)$.
The regularized Petersson inner product $(f,g)_{reg}$ of $f$ and $g$ is defined by
\[
(f,g)_{reg}:=\lim_{\varepsilon \rightarrow 0} \int_{\mathcal{F}_N(g,\varepsilon)}f(z)\overline{g(z)}\frac{dxdy}{y^{k-2}}.
\]

By using the Stokes' theorem, we obtain the following theorem (for example, see \cite[Proposition 3.5]{BF} or \cite[Lemma 3.1]{C1}).

\begin{lem} \label{regularized_formula}
Suppose that $f$ is a harmonic weak Maass form in $H^*_{k}(\Gamma_0(N))$ with singularities only at cusps equivalent to $i\infty$, and that $g$  is a meromorphic modular form. 
Then
\[
(\xi_0(f),g)_{reg}=\sum_{m+n=0} \sum_{t \in \mathcal{C}_N} a_f^t(m) a_g^t(n)+\sum_{ \tau \in \mathcal{F}_N} \frac{2\pi i}{e_{\tau}}\mathrm{Res}_{\tau}(g)f(\tau),
\]
where $a_g^t(n)$ and $a_f^t(n)$ are $n$-th Fourier coefficients of $g$ and $f$ at the cusp $t$, respectively.
\end{lem}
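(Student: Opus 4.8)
The plan is to evaluate the regularized integral by Stokes' theorem on the truncated fundamental domain $\mathcal{F}_N(g,\varepsilon)$, converting the area integral into a sum of boundary contour integrals, and then to read off the residue terms and the Fourier-coefficient pairing from those contours as $\varepsilon\to 0$.

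First I would introduce the auxiliary $1$-form $\omega:=f(z)\,g(z)\,dz$ on $\HH$ away from the poles of $g$. Since $f$ is $\Gamma_0(N)$-invariant of weight $0$ and $g\,dz$ is a $\Gamma_0(N)$-invariant meromorphic differential, $\omega$ descends to an open subset of $Y_0(N)$. Because $g$ is holomorphic away from its poles, a direct computation gives
\[
d\omega=\partial_{\bar z}(fg)\,d\bar z\wedge dz=2i\,(\partial_{\bar z}f)\,g\,dx\,dy=-\overline{\xi_0(f)}\,g\,dx\,dy,
\]
where the last equality uses $\xi_0(f)=2i\,\overline{\partial_{\bar z}f}$. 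Thus the integrand defining $(\xi_0(f),g)_{reg}$ is, up to conjugation and sign, the exact form $d\omega$, and Stokes' theorem on $\mathcal{F}_N(g,\varepsilon)$ rewrites $(\xi_0(f),g)_{reg}$ as a limit of contour integrals of $\omega$ over $\partial\mathcal{F}_N(g,\varepsilon)$. The orientation and conjugation conventions must be tracked carefully through this step to reach the signs in the stated formula.

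Next I would split $\partial\mathcal{F}_N(g,\varepsilon)$ into the outer sides of $\mathcal{F}_N$, the small circles $\partial B_\tau(\varepsilon)$ about interior singularities $\tau\in\mathrm{Sing}(g)$, and the horocycles $\partial B_t(\varepsilon)$ about the cusps $t\in\mathcal{C}_N$. The outer sides cancel in pairs, since $\omega$ is $\Gamma_0(N)$-invariant while identified sides carry opposite induced orientations. Around an interior point, inserting the Laurent expansion $g(z)=\mathrm{Res}_\tau(g)/(z-\tau)+\cdots$ and the continuity of $f$, the circle integral tends to $\tfrac{2\pi i}{e_\tau}\mathrm{Res}_\tau(g)\,f(\tau)$, the factor $e_\tau$ reflecting that the fundamental domain subtends only the fraction $1/e_\tau$ of a full turn at an elliptic point; summing over $\tau$ produces the second sum. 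At each cusp $t$ I would substitute the Fourier expansions of $f|_0\sigma_t$ and $g|_2\sigma_t$ into the horocycle integral and integrate in the $x$-variable; all terms vanish except the $x$-independent ones, leaving exactly $\sum_{m+n=0}a_f^t(m)\,a_g^t(n)$, which assembles into the first sum.

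The main obstacle is the limit at the cusps equivalent to $i\infty$, where $f$ is genuinely singular. There the principal part of $f$ together with the exponentially growing pieces of its non-holomorphic part $f^-$ contribute, and $g$ may carry a nonzero constant Fourier coefficient, so individual pieces of the horocycle integral diverge as $\varepsilon\to 0$. The crux is to show that these divergences cancel and that the finite remainder is precisely the diagonal pairing $\sum_{m+n=0}a_f^t(m)\,a_g^t(n)$; here the hypothesis that $\xi_0(f)$ lies in $S_2(\Gamma_0(N))$ supplies the decay that makes the regularized limit exist, while the hypothesis that $f$ is singular only at cusps equivalent to $i\infty$ confines the delicate estimate to those cusps and lets the remaining cusps contribute only their constant terms. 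Carrying out this cancellation and convergence analysis, with all orientation and conjugation conventions made consistent, is the technical heart of the proof; the side-cancellation and residue steps are then routine consequences of $\Gamma_0(N)$-invariance and the residue theorem.
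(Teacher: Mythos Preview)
Your proposal is correct and follows the same approach the paper indicates: the paper does not actually write out a proof of this lemma but simply states that it follows from Stokes' theorem, citing \cite[Proposition~3.5]{BF} and \cite[Lemma~3.1]{C1}. Your outline---applying Stokes to the $\Gamma_0(N)$-invariant form $f(z)g(z)\,dz$ on the truncated domain, cancelling paired sides, extracting residues at interior poles, and reading off the Fourier pairing from the horocycles---is precisely the argument in those references, and your identification of the cuspidal cancellation (controlled by $\xi_0(f)\in S_2$) as the only delicate point is accurate.
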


By using this lemma, we prove the following theorem, which states that the regularized Petersson inner product of a meromorphic modular form associated with a canonical differential of the third kind of some divisor on $X_0(N)$ with every cusp form of weight $2$ on $\Gamma_0(N)$ is zero.

\begin{prop}\label{regpetersson}
Let $g$ be a meromorphic modular form of weight $2$ on $\Gamma_0(N)$ associated with a canonical differential of the third kind. Then, for every cusp form $f$ of weight $2$ on $\Gamma_0(N)$,
\[
(f,g)_{reg}=0
\]
\end{prop}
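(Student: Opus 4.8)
The plan is to apply Lemma \ref{regularized_formula} with $k=2$, using a harmonic weak Maass form $f_0 \in H^*_0(\Gamma_0(N))$ whose image under $\xi_0$ is the given cusp form $f \in S_2(\Gamma_0(N))$, and to exploit the fact that $\Phi_D = 2\pi i g(z)\,dz$ is the \emph{canonical} differential of the third kind, i.e. $\Phi_D = \partial_z h$ for a real harmonic function $h$ on $X_0(N)$ with logarithmic singularities at the points of $\operatorname{res}(\Phi_D) = D$. First I would recall that $\xi_0$ maps $H^*_0(\Gamma_0(N))$ onto $S_2(\Gamma_0(N))$, so pick $f_0$ with $\xi_0(f_0) = f$; since $f$ is a cusp form, $f_0$ has no singularities at all (in particular none at cusps), so the hypotheses of Lemma \ref{regularized_formula} are satisfied trivially. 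Then
\[
(f,g)_{reg} = (\xi_0(f_0),g)_{reg} = \sum_{m+n=0}\sum_{t\in\mathcal{C}_N} a_{f_0}^t(m)a_g^t(n) + \sum_{\tau\in\mathcal{F}_N}\frac{2\pi i}{e_\tau}\operatorname{Res}_\tau(g)\,f_0(\tau).
\]

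Next I would evaluate the two sums. For the cuspidal term: the residue of $\Phi_D$ at a cusp $t$ equals the constant term $a_g^t(0)$, and because $D$ is supported away from or at prescribed points, the key point is that the sum $\sum_{m+n=0} a_{f_0}^t(m)a_g^t(n)$ reduces, after using that $f_0$ is harmonic of weight $0$ and that $g$ has at most simple poles, to a pairing between the principal/constant parts of $f_0$ and the constant term of $g$ at $t$. For the interior term: $\operatorname{Res}_\tau(g)$ is (a multiple of) the residue divisor coefficient of $\Phi_D$ at $Q_\tau$, via the residue dictionary $\operatorname{Res}_{Q_\tau} g\,dz = \frac{1}{e_\tau}\operatorname{Res}_\tau(g)$ recalled in the preliminaries. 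Combining, the whole right-hand side collapses to $2\pi i \sum_{P} (\operatorname{res}\Phi_D)_P \, h_0(P)$-type expression, where $h_0$ is the (finite part of the) potential; the cuspidal contributions are exactly the terms that make this a well-defined pairing of $D = \operatorname{res}(\Phi_D)$ against the real harmonic potential.

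The decisive step is then to use $\Phi_D = \partial_z h$: since $f = \xi_0(f_0)$ and the regularized inner product pairs $f$ against $\bar g$, one rewrites $\int_{\mathcal{F}_N(g,\varepsilon)} f\,\overline{g}\,\frac{dx\,dy}{y^{0}}$ — wait, here $k=2$ so the measure is $\frac{dx\,dy}{y^{0}} = dx\,dy$ — as (up to constants) $\int \overline{\partial}(\text{something involving } h)$, and Stokes' theorem turns it into boundary integrals around the singular points of $g$ and around the cusps. Because $h$ is a \emph{single-valued} real harmonic function on the compact curve $X_0(N)$ (this is precisely what distinguishes the canonical $\Phi_D$ from a general $\psi_D$, which differs by a holomorphic differential and need not be $\partial_z$ of a single-valued function), the monodromy/period contributions that would otherwise survive all cancel, and each local boundary integral around a point of $D$ contributes the residue times the value of the regular part of $h$ there, with signs arranged so that the total telescopes to zero. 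Equivalently, and more cleanly: the pairing $(f,g)_{reg}$ computes, up to a nonzero constant, the image of $f$ under the map $S_2(\Gamma_0(N)) \to \mathbb{C}$ given by integrating $f$ against the harmonic differential $\partial h = \Phi_D$; but $\Phi_D - \overline{\Phi_D}$ being \emph{exact} (equal to $dh$ minus its conjugate, i.e. the differential of a global function) forces this period to vanish for every holomorphic $f$.

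The main obstacle I anticipate is the careful bookkeeping of the boundary terms at the cusps when applying Stokes' theorem: one must check that the constant terms $a_g^t(0)$ of $g$ at the cusps (which are nonzero exactly when $D$ has support at cusps) pair against the appropriate Fourier data of $f_0$ in such a way that these cuspidal contributions are included in — rather than additional to — the telescoping sum over $\operatorname{res}(\Phi_D)$. Concretely, one needs the identity that the ``constant term pairing'' $\sum_{m+n=0}\sum_t a_{f_0}^t(m) a_g^t(n)$ equals $-\sum_{\tau\in\mathcal{C}_N}\frac{2\pi i}{\alpha_\tau}\alpha_\tau a_g^\tau(0)\cdot(\text{value of potential at } Q_\tau)$ so that it merges with the interior sum; establishing this cleanly is where the single-valuedness of $h$ must be invoked, since it is what guarantees there is a well-defined ``value of the potential'' to pair against, consistently at both cusps and interior points. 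Once that merging is in place, the vanishing is immediate from $\operatorname{res}(\Phi_D) = D$ having degree zero together with $h$ being globally defined up to the known logarithmic singularities.
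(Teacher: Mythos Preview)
Your proposal contains the right ingredient --- the existence of a single-valued real harmonic potential $G$ with $\partial_z G = g$, which is precisely what ``canonical'' means --- but you misidentify the mechanism by which the boundary terms vanish, and the detour through Lemma~\ref{regularized_formula} and a preimage $f_0\in H^*_0(\Gamma_0(N))$ is both unnecessary and problematic.

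First, the claim that ``since $f$ is a cusp form, $f_0$ has no singularities at all'' is unjustified: membership in $H^*_0$ only says $\xi_0(f_0)$ is cuspidal, which constrains the non-holomorphic part of $f_0$, not its principal part at the cusps. More importantly, once you write down the formula from Lemma~\ref{regularized_formula}, you have no way to show the right-hand side vanishes: it involves the values $f_0(\tau)$ at the poles of $g$ and the Fourier coefficients of $f_0$, and there is no ``telescoping'' or degree-zero argument that makes this collapse. Your claim that each boundary circle contributes ``residue times the regular value of $h$'' and that these cancel because $\deg D=0$ is simply not what happens.

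The paper's proof bypasses Lemma~\ref{regularized_formula} entirely and works directly with the cusp form $f$. One writes $d\bigl(f(z)\,\overline{G(z)}\,dz\bigr) = 2i\,f(z)\,\overline{g(z)}\,dx\,dy$ and applies Stokes' theorem on $\mathcal{F}_N(g,\varepsilon)$. Each boundary contribution is then shown to vanish \emph{individually} as $\varepsilon\to 0$, by an estimate rather than a cancellation: at a cusp, the exponential decay of the cusp form $f$ kills the integral; at an interior singularity $\tau$ of $g$, the potential satisfies $|G(z)| = O(|\log\varepsilon|)$ on the circle of radius $\varepsilon$ (because $G$ has only a logarithmic singularity there), so the boundary integral is $O(\varepsilon\,|\log\varepsilon|)\to 0$. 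The single-valuedness of $G$ is what makes $f\,\overline{G}\,dz$ a well-defined $1$-form on the punctured fundamental domain so that Stokes applies without monodromy terms; the vanishing itself comes from the growth estimates, not from any residue-pairing identity.
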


\begin{proof}
Let $G$ be a harmonic function on $\mathbb{H}$ with $\log$-type singularities such that $\partial_z G=g$.
Then, we have
\[
d(f(z) \overline{G(z)}dz)=f(z)  \overline{\partial_z G(z)} d\overline{z}dz =f(z) \overline{g(z)} (2i)dxdy.
\]

To apply the Stokes' theorem, we give the description of the boundary of $\mathcal{F}_N(g,\epsilon)$.
For a subset $D$ of $\mathbb{C}$, let $\partial D$ denote the boundary of $D$.
For a positive real number $\varepsilon$,  we define
\[
\gamma_{\tau}(\varepsilon):=\left\{
\begin{array}{ll}
  \{ z \in \mathbb{H} \; : \; |z-\tau| = \varepsilon \} & \text{if } \tau \in  \mathbb{H},  \\
  \{ z \in \mathcal{F}_N \; : \; \mathrm{Im}(\sigma_{\tau}z) = 1/\varepsilon \} & \text{if } \tau \in  \{ i \infty\} \cup \mathbb{Q}.
\end{array}
\right.
\]
Assume that $\varepsilon$ is sufficiently small. If we let $\partial^* \mathcal{F}_N(g, \varepsilon)$ be the closure of the set $\partial \mathcal{F}_N(g, \varepsilon)-\partial \mathcal{F}_N$ in $\mathbb{C}$, then
\begin{equation}\label{boundary}
\partial^* \mathcal{F}_N(g, \varepsilon)=\cup_{\tau \in \mathrm{Sing}(g) \cup \mathcal{C}_N} \gamma_{\tau}(\varepsilon).
\end{equation}
From (\ref{boundary}), the Stokes' theorem implies
\[
\int_{\mathcal{F}_N(g,\varepsilon)}f(z)\overline{g(z)}dxdy= \int_{\partial^* \mathcal{F}_N(g, \varepsilon)} \frac{1}{2i} f(z) \overline{G(z)}dz=\sum_{\tau \in \mathrm{Sing}(g) \cup \mathcal{C}_N} \int_{\gamma_{\tau}(\varepsilon)} \frac{1}{2 i} f(z) \overline{G(z)}dz.
\]

For each $\gamma \in \SL_2(\mathbb{Z})$, the absolute value $|(f|_2 {\gamma})(z)|$ exponentially decays as $\mathrm{Im}(z) \rightarrow \infty$, since $f$ is a cusp form. Thus, if $\tau \in \mathcal{C}_N$, then $\lim_{\varepsilon \rightarrow 0}\int_{\gamma_{\tau}(\varepsilon)} \frac{1}{ 2i} f(z) \overline{G(z)} d{z}=0$.

To complete the proof, we assume that $\tau \in \mathrm{Sing}(g)$. Then,
\begin{align*}
&\left | \int_{\gamma_{\tau}(\varepsilon)} \frac{1}{ 2i} f(z) \overline{G(z)} d{z} \right| \\
&\leq \int_{\gamma_{\tau}(\varepsilon)} \frac{1}{2} |f(z)| |\overline{G(z)}| |d{z}| \\
&\leq \max\{ |G(z)| \; : \; z \in \gamma_{\tau}(\varepsilon) \} M_1  \int_{\gamma_{\tau}(\varepsilon)}|d{z}| \;\; (\text{some constant } M_1) \\
&\leq  \max\{ |G(z)| \; : \; z \in \gamma_{\tau}(\varepsilon) \} M_1 (2\pi \varepsilon).
\end{align*}
The function $G$ can be expressed around $\tau$ as
\[
G(z)=-\log_e (|z-\tau|(z-\tau))+G_0(z),
\]
where $G_0(z)$ is a smooth function around $\tau$.
In the definition of $\log_e$, we use the principal branch. If $\varepsilon$ is sufficiently small, then, for any $z \in \gamma_{\tau}(\varepsilon)$,  we have
\begin{align*}
|G(z)|&\leq |\log_{e}(|z-\tau|(z-\tau))|+|G_0(z)| \\
&\leq |\log_{e}|z-\tau||+  |\log_{e}(z-\tau)| +      M_2 \;\; (\text{some fixed constant } M_2)\\
&\leq 2|\log_{e} \varepsilon|+ \pi +           M_2.
\end{align*}
Thus, for sufficiently small $\varepsilon$, we obtain
\[
\left | \int_{\gamma_{\tau}(\varepsilon)} \frac{1}{2i} |f(z)| \overline{G(z)}d{z} \right| \leq (2|\log_{e} \varepsilon|+ \pi + M_2)  M_1 (2\pi \varepsilon).
\]
This implies that, for $\tau \in \mathrm{Sing}(g)$,
\[
\lim_{\varepsilon \rightarrow 0}  \int_{\gamma_{\tau}(\varepsilon)} \frac{1}{2 i}f(z) \overline{G(z)}d{z} =0.
\]
Thus, we complete the proof.
\end{proof}

\section{Proofs} \label{Proof}
In this section, we prove Theorem \ref{main0} and \ref{main1}.
In this section, we always assume that $N$ is a prime.
Let $D$ be a divisor of $X_0(N)$.
Let $\psi_D$ be a differential of the third kind associated to $D$.
Let $A_D$ be the space of meromorphic differentials, with only simple poles, such that their poles are only at the support of $D$.
If $D$ is defined over $\overline{\mathbb{Q}}$, then
there is a basis of $A_D$ consisting of meromorphic differentials defined over $\overline{\mathbb{Q}}$. Let us note that $D$ is defined over $\overline{\mathbb{Q}}$ if and only if there exists $\psi_D$ defined over $\overline{\mathbb{Q}}$.
Thus, for a divisor $D$ defined over $\overline{\mathbb{Q}}$, let $\psi_D^{alg}$ be a differential of the third kind associated to $D$ defined over $\overline{\mathbb{Q}}$.
Let $\Phi_D$ be the canonical differential of the third kind associated to $D$ and
\[
F_D:=\Phi_D-\psi_D^{alg}.
\]
Note that $F_D$ (resp. $\Phi_D$ and $\psi_D^{alg}$) can be written as $2\pi if_{F_D}(z)dz$ (resp. $2\pi if_{\Phi_D}(z)dz$ and $2\pi if_{\psi_D^{alg}}(z)dz$) for some meromorphic modular forms $f_{F_D}, f_{\Phi_D}$, and $f_{\psi_D^{alg}}$ of weight $2$ on $\Gamma_0(N)$.

Let us consider  special divisors $D_\tau$, where 
\[
D_\tau = Q_{i\infty} - Q_{\tau} \in \mathrm{Div}_{X_0(N)}(\mathbb{C})
\] 
for $\tau\in \HH$.
Let us note that $Q_{i\infty}$  is defined over $\bar{\QQ}$.
A modular curve $Y_0(N)$ is defined by an equation $\Phi_N(X,Y) = 0$ such that $\Phi_N(X,Y) \in \mathbb{Q}[X,Y]$ and $\Phi_{N}(j(Nz), j(z)) = 0$ for all $z\in \HH$. 
Thus, by Theorem A, $\tau\in\HH$ is a CM point if and only if $Q_\tau$ is defined over $\bar{\QQ}$.
Thus, there exists $\psi_{D_\tau}^{alg}$  for $D_\tau$ if and only if $\tau$ is a CM point. 

With these notations, we prove the following lemma. 

\begin{lem}\label{algebraic}
With the above notation, assume that $f$ is an arithmetic harmonic weak Maass form in $H^*_{k}(\Gamma_0(N))$, and that $D:= Q_{i\infty} - Q_\tau$ is defined over $\overline{\mathbb{Q}}$. Let $m$ be a positive integer prime to $N$.
Then, the following statements are true.
\begin{enumerate}
\item For each $m$,
\[
f(T_m.Q_\tau)-\left(\xi_{0}(f|T_m),f_{\psi_D^{alg}}  \right)_{reg}
\]
is algebraic. Especially, if $f$ is a Hecke eigenform, then
\[
f(T_m.Q_\tau)-m^{-1}\lambda_m\left( \xi_{0}(f), f_{\psi_D^{alg}}\right)_{reg}
\]
is algebraic. Here, $\lambda_m$ is the eigenvalue of $\xi_0(f)$ for $T_m$.
\item For each $m$, $f(T_m.Q_\tau)$ is algebraic if and only if $( \xi_0(f|T_m), f_{F_D})$ is algebraic.
\end{enumerate}
\end{lem}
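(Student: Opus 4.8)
The plan is to compute $f(T_m.Q_\tau)$ by splitting the canonical differential $\Phi_D$ into its algebraic part $\psi_D^{alg}$ and the transcendental correction $F_D$, and to read off $f$ evaluated at the support of $D$ from the residue formula of Lemma \ref{regularized_formula}. First I would recall that $T_m.D_\tau$ is again a degree-zero divisor defined over $\overline{\mathbb{Q}}$ (since $T_m$ is defined over $\mathbb{Q}$ and $D_\tau$ is defined over $\overline{\mathbb{Q}}$), and that evaluating $f$ on $T_m.Q_\tau$ is the same as evaluating $f|T_m$ on $Q_\tau$ up to the contribution of $Q_{i\infty}$; more precisely, since $T_m.D_\tau = T_m.Q_{i\infty} - T_m.Q_\tau$ and $f$ has its singularities only at cusps equivalent to $i\infty$, the term $f(T_m.Q_{i\infty})$ is governed entirely by the principal part of $f$ at $i\infty$, which is algebraic by the arithmeticity hypothesis. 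So the content is in $f(T_m.Q_\tau)$ and this is what the residue of a suitable third-kind differential will detect.

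Next I would apply Lemma \ref{regularized_formula} with $g = f_{\psi_D^{alg}}$, a meromorphic modular form of weight $2$ whose associated differential $\psi_D^{alg} = 2\pi i f_{\psi_D^{alg}}(z)\,dz$ has residue divisor $D = Q_{i\infty}-Q_\tau$ (after possibly adjusting by a holomorphic cusp form, which contributes nothing to residues). The interior residue sum $\sum_{\tau'\in\mathcal{F}_N} \frac{2\pi i}{e_{\tau'}}\mathrm{Res}_{\tau'}(f_{\psi_D^{alg}})f(\tau')$ then picks out precisely $\pm f(Q_\tau)$ (with an integer weight coming from $e_\tau$ and the residue, all of which are rational), while the cusp sum $\sum_{m'+n'=0}\sum_{t}a_f^t(m')a_g^t(n')$ is algebraic: the Fourier coefficients of $f_{\psi_D^{alg}}$ at every cusp are algebraic because $\psi_D^{alg}$ is defined over $\overline{\mathbb{Q}}$ and, by the $q$-expansion principle together with the Galois-invariance of the set of cusps of $X_0(N)$, algebraicity at one cusp propagates to all cusps; and the principal-part coefficients $a_f^t(m')$ that survive (only $m'\le 0$) are algebraic by arithmeticity of $f$ — here one uses that $f$ has poles only at cusps equivalent to $i\infty$, so at other cusps the relevant coefficients are the constant terms, which are controlled by condition (2) of arithmeticity. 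Combining, $f(Q_\tau) - \big(\xi_0(f), f_{\psi_D^{alg}}\big)_{reg}$ is algebraic; replacing $f$ by $f|T_m$ (still arithmetic, with algebraic principal part, since the Hecke operator acts on $q$-expansions by algebraic — in fact integral — combinations) and using $D$ in place of $T_m^\vee.D$ appropriately gives statement (1). The Hecke-eigenform refinement then follows from the commutation relation \eqref{Heckedifferentialcommutative}: $\xi_0(f|T_m) = m^{-1}\,\xi_0(f)|T_m = m^{-1}\lambda_m\,\xi_0(f)$, since $\xi_0(f)$ is a weight-$2$ Hecke eigenform with eigenvalue $\lambda_m$ for $T_m$.

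For statement (2), I would subtract: by definition $F_D = \Phi_D - \psi_D^{alg}$, so $f_{\Phi_D} = f_{F_D} + f_{\psi_D^{alg}}$ and by linearity of the regularized inner product $\big(\xi_0(f|T_m), f_{\Phi_D}\big)_{reg} = \big(\xi_0(f|T_m), f_{F_D}\big)_{reg} + \big(\xi_0(f|T_m), f_{\psi_D^{alg}}\big)_{reg}$. But $\Phi_D$ is a \emph{canonical} differential of the third kind, so Proposition \ref{regpetersson} gives $\big(g, f_{\Phi_D}\big)_{reg} = 0$ for every cusp form $g$ of weight $2$, and in particular for $g = \xi_0(f|T_m)$ (which lies in $S_2(\Gamma_0(N))$ because $f|T_m \in H^*_0$). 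Hence $\big(\xi_0(f|T_m), f_{\psi_D^{alg}}\big)_{reg} = -\big(\xi_0(f|T_m), f_{F_D}\big)_{reg}$, and feeding this into part (1) shows that $f(T_m.Q_\tau)$ is algebraic if and only if $\big(\xi_0(f|T_m), f_{F_D}\big)_{reg}$ is — and since $F_D$ has poles only at the support of $D$, which lies among cusps equivalent to $i\infty$ together with the CM point $Q_\tau$, one further checks via Lemma \ref{regularized_formula} that this last regularized product equals (up to an algebraic number) the transcendental quantity $\big(\xi_0(f), f_{F_D}\big)$ scaled by $m^{-1}\lambda_m$; I would phrase (2) in the form actually needed downstream.

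The main obstacle I anticipate is the bookkeeping around the Hecke action on divisors versus on forms: one must be careful that $f(T_m.Q_\tau)$ genuinely corresponds to $(f|T_m)$ evaluated against the \emph{original} divisor $D_\tau$ (and not $T_m.D_\tau$), which requires the self-adjointness of $T_m$ on the relevant pairing and the fact that the discrepancy involves only cusp contributions that are algebraic by arithmeticity. A secondary technical point is justifying that the ambiguity in $\psi_D^{alg}$ (addition of a weight-$2$ cusp form defined over $\overline{\mathbb{Q}}$) does not affect any statement: it changes $\big(\xi_0(f), f_{\psi_D^{alg}}\big)_{reg}$ by an honest Petersson inner product of two weight-$2$ cusp forms, which need not be algebraic, so one must either fix a choice of $\psi_D^{alg}$ or absorb this into the hypotheses — I expect the cleanest route is to observe that $F_D = \Phi_D - \psi_D^{alg}$ also depends on that choice in the same way, so the \emph{combination} appearing in (2) is well-defined, and for (1) one fixes $\psi_D^{alg}$ once and for all as in the statement.
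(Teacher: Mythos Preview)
Your plan is essentially the paper's: apply Lemma~\ref{regularized_formula} to the pair $(f|T_m,\,f_{\psi_D^{alg}})$, identify the interior residue term with $(f|T_m)(Q_\tau)=f(T_m.Q_\tau)$, argue the cuspidal Fourier sum is algebraic, and then derive (2) from (1) via Proposition~\ref{regpetersson} together with $f_{F_D}=f_{\Phi_D}-f_{\psi_D^{alg}}$.

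One justification needs correction. At the cusp $0$, condition~(2) in the definition of ``arithmetic'' only says that the principal part of $f$ there is \emph{constant}; it does not assert that this constant $a_f^0(0)$ is algebraic. So your appeal to condition~(2) does not by itself make the term $a_{f|T_m}^0(0)\,a_{f_{\psi_D^{alg}}}^0(0)$ algebraic. The paper closes this by first subtracting from $f_{\psi_D^{alg}}$ a rational multiple $c_0$ of the weight-$2$ Eisenstein series $E_N(z)=E_2(z)-NE_2(Nz)$ so that the constant term at the cusp $0$ vanishes; this does not change the regularized pairing since $E_N\perp S_2(\Gamma_0(N))$, and afterwards only the $i\infty$ sum survives, where algebraicity is immediate from condition~(1). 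Equivalently, for the specific divisor $D=Q_{i\infty}-Q_\tau$ you can just note that $a_{f_{\psi_D^{alg}}}^0(0)=0$ because $Q_0\notin\mathrm{supp}(D)$, so the residue of $\psi_D^{alg}$ at $Q_0$ vanishes; either route fills the gap.

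A minor slip in your (2): $f_{F_D}$ is a genuine cusp form in $S_2(\Gamma_0(N))$ (both $\Phi_D$ and $\psi_D^{alg}$ have residue divisor $D$, so their difference is holomorphic with vanishing constant terms), hence the pairing $(\xi_0(f|T_m),f_{F_D})$ is the ordinary Petersson product and no further residue analysis is needed---your remark that $F_D$ ``has poles only at the support of $D$'' should be dropped.
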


\begin{proof}
(1) By the definition of $\psi_D^{alg}$, the constant term of $f_{\psi_D^{alg}}$ at each cusp is  an integer. Let
\[
E_N(z) :=E_2(z)-NE_2(Nz),
\]
where $E_2(z)=1-24\sum_{n=1}^{\infty} \sigma_1(d) q^n$ is the Eisenstein series of weight $2$. Here, $\sigma_1(n)$ is a function over $\mathbb{Q}$ defined by
$$
 \sigma_1(n):=
\begin{cases}
 \sum_{d|n} d & \text{if $n$ is a positive integer}, \\
  0 & \text{elsewhere}.
\end{cases}
$$
Since $N$ is a prime, $\Gamma_0(N)$ has only two inequivalent cusps. Thus, there is a rational number $c_0$ such that the constant term of
\[
f_{\psi_D^{alg}}- c_0 E_N
\]
is zero at each cusp inequivalent to $i \infty$.
Let us note that $E_N$ is orthogonal to $S_2(\Gamma_0(N))$ with respect to the Petersson inner product, and that
 the principal part of $f_{\psi_D^{alg}}$ at the cusp $i\infty$ is constant since poles of $\psi_D^{alg}$ are simple poles.
By Lemma \ref{regularized_formula}, we have
\begin{align*}
\left( \xi_0(f|T_m), f_{\psi_D^{alg}}\right)_{reg}&=\left(\xi_0(f|T_m), f_{\psi_D^{alg}}- c_0 E_N\right)_{reg}\\
&=\sum_{n\geq 1}a_{f|T_m}(-n)\left(c_{\psi_D^{alg}}(n)- 24 c_0(\sigma(n)-N\sigma(n/N))\right)+(f|T_m)(Q_\tau)\\
&=\sum_{n\geq 1}a_{f|T_m}(-n)\left(c_{\psi_D^{alg}}(n)-24 c_0(\sigma(n)-N\sigma(n/N))\right)+f(T_m.Q_\tau),
\end{align*}
where $a_{f|T_m}(n)$ (resp. $c_{\psi_D^{alg}}$) denotes the $n$-th Fourier coefficient of $(f|T_m)^+$ (resp. $f_{\psi_D^{alg}}$) at the cusp $i\infty$.
Note that the principal part of $f|T_m$ at the cusp $i\infty$ belongs to $\bar{\QQ}[q^{-1}]$
 since the principal part of $f$ at the cusp $i\infty$ does.
This proves the first part.
The second part follows from (\ref{Heckedifferentialcommutative}).

(2) Let us note that $f_{F_D}=f_{\Phi_D}-f_{\psi_D^{alg}}$ is a cusp form.
Thus, by Proposition \ref{regpetersson} we have
$$
 (\xi_0(f|T_m),f_{F_D})=\left(\xi_0(f|T_m), f_{\Phi_D}\right)_{reg}-\left(\xi_0(f|T_m),f_{\psi_D^{alg}}\right)_{reg}=-\left(\xi_0(f|T_m),f_{\psi_D^{alg}}\right)_{reg}.
$$
From (1), we get the desired result.
\end{proof}

Theorem \ref{main0} comes from Lemma \ref{algebraic}.

\begin{proof} [Proof of Theorem \ref{main0}]
It is immediately implied by Lemma \ref{algebraic}.
\end{proof}

For a harmonic weak Maass form $f$, we define
\[
\mathrm{Prin}(f) := \prod_{n>1\atop a_f(-n)\neq 0} n,
\]
where $a_f(n)$ is the $n$-th Fourier coefficient of $f^+$ at the cusp $i\infty$. For a positive integer $m$, we define the operators $U_m$ and $V_m$ for $F$, which are meromorphic functions on $\mathbb{H}$, by
$$(F|U_m)(z)  := \frac{1}{m} \sum_{j=0}^{m-1} F\left( \frac{z+j}{m} \right)$$
and
$$(F|V_m)(z) := F(mz).$$
If $F=\sum_{n}a_F(n)q^n$ is a meromorphic modular form of weight $k$ on $\Gamma_0(N)$, then
\begin{enumerate}
\item $F|U_m$ and $F|V_m$ are meromorphic modular forms of weight $k$ on $\Gamma_0(mN)$,
\item $(F|U_m)(z)=\sum_{n} a_F(mn)q^n$ and $(F|V_m)(z)=\sum_{n} a_F(n)q^{mn}$
\end{enumerate}
(for example, see Section 2.4 in \cite{O} for more details on the operators $U_m$ and $V_m$).
To prove Theorem \ref{main1}, we need the following lemmas.

\begin{lem} \label{nonCM}
Let $f \in H^*_0(\Gamma_0(N))$ be an arithmetic Hecke eigenform.
Let $\tau$ be an algebraic number on $\mathbb{H}$. Assume that $D:=Q_{i \infty}-Q_{\tau}$.
If $\tau$ is not a CM point, then there are infinitely many positive integers $m$ prime to $N\cdot \mathrm{Prin}(f)$ such that
\begin{equation} \label{m_coefficient}
\sum_{n\geq1} a_f(-n)c_{\Phi_D}(mn)
\end{equation}
is transcendental, where $a_f(n)$ (resp. $c_{\Phi_D}(n)$) is the $n$-th Fourier coefficient of $f^+$ (resp. $f_{\Phi_D}$) at the cusp $i\infty$.
\end{lem}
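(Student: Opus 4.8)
The plan is to contradict the hypothesis that only finitely many $m$ prime to $N\cdot\mathrm{Prin}(f)$ give a transcendental value of the quantity in \eqref{m_coefficient}, by showing that otherwise a certain meromorphic modular form built from $\Phi_{D}$ would be algebraic, and then contradicting Theorem \ref{algebraic_differential} (Scholl) together with Theorem A. First I would reformulate the sum $\sum_{n\geq 1} a_f(-n)c_{\Phi_D}(mn)$ as the $0$-th Fourier coefficient at $i\infty$ of an explicit modular object. Indeed, if $P_f^-(z) = \sum_{n\geq 1} a_f(-n) q^{-n}$ is (minus) the principal part of $f$ at $i\infty$, then $\sum_{n\geq 1} a_f(-n) c_{\Phi_D}(mn)$ is the constant term of $P_f^-(z)\cdot (f_{\Phi_D}|U_m)(z)$, a meromorphic modular form of weight $2$ on $\Gamma_0(mN)$ whose only poles in $\HH$ are at the $\Gamma_0(mN)$-translates of the points above $\tau$ under $z\mapsto mz$ and its Hecke partners. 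The key point is that the constant term of this product, up to an algebraic correction coming from the principal part of $f$ having algebraic coefficients (which is where the assumption $m$ prime to $\mathrm{Prin}(f)$ enters, so that the relevant negative-index coefficients of $f|T_m$ or of $f_{\Phi_D}|U_m$ are not accidentally killed), equals \eqref{m_coefficient} up to an element of $\overline{\mathbb{Q}}$.

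Next I would argue by contradiction: suppose \eqref{m_coefficient} is algebraic for all but finitely many $m$ prime to $N\cdot\mathrm{Prin}(f)$. The Hecke relations for $T_m$ acting on $M^!_2(\Gamma_0(N))$, combined with the commutation \eqref{Heckedifferentialcommutative} between $T_m$ and $\xi_{-k}$ and the multiplicativity of the eigenvalues $\lambda_m$ of $\xi_0(f)$, let me propagate the algebraicity: from the algebraicity of \eqref{m_coefficient} for $m$ in a set of positive density (in fact all large $m$ coprime to $N\cdot\mathrm{Prin}(f)$) I want to deduce that $c_{\Phi_D}(n)$ is algebraic for all $n$, or at least for $n$ in a set large enough to force the conclusion. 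The mechanism is: the linear functional ``constant term of $P_f^-\cdot (\,\cdot\,|U_m)$'' sees the coefficients $c_{\Phi_D}(mn)$ for $n$ ranging over the (finite) support of the principal part of $f$; varying $m$ over all large coprime integers and using that $1\in\{n : a_f(-n)\neq 0\}$ is \emph{not} guaranteed but $\mathrm{Prin}(f)$ records exactly which $n>1$ occur, one recovers $c_{\Phi_D}(m)$ for all large $m$ coprime to $N\cdot\mathrm{Prin}(f)$ as an algebraic combination — here I would isolate the ``leading'' term $a_f(-n_0)c_{\Phi_D}(m n_0)$ for the smallest $n_0$ in the support and induct. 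Then the Hecke operator $T_p$ for $p\nmid N$ acting on the weakly holomorphic side, together with the recursion $c(p^{r+1}) = a_p c(p^r) - p\, c(p^{r-1})$ coming from $f_{\Phi_D}$ being an eigenform component (or an explicit linear-algebra argument over the finitely many eigenform components in $S_2$), upgrades algebraicity of $c_{\Phi_D}(n)$ for $n$ coprime to $N\cdot\mathrm{Prin}(f)$ to algebraicity of $c_{\Phi_D}(n)$ for \emph{all} $n$, since the bad primes are finite in number and one can clear them using $U_p$, $V_p$ and the structure of $M_2^!(\Gamma_0(N))$.

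Once all Fourier coefficients of $f_{\Phi_D}$ at $i\infty$ are algebraic, the $q$-expansion principle (as recalled after Theorem \ref{algebraic_differential}) says that $\Phi_D$ is defined over $\overline{\mathbb{Q}}$. By Scholl's theorem (Theorem \ref{algebraic_differential}) this forces some nonzero multiple of $D = Q_{i\infty} - Q_\tau$ to be principal on $X_0(N)$; in particular $D$, hence $Q_\tau$, is defined over $\overline{\mathbb{Q}}$, and then by Theorem A (the equation $\Phi_N(j(Nz),j(z))=0$ defining $Y_0(N)$, so that $Q_\tau\in X_0(N)(\overline{\mathbb{Q}})$ forces $j(\tau)\in\overline{\mathbb{Q}}$) $\tau$ must be a CM point — contradicting the hypothesis that $\tau$ is not CM. Therefore the set of $m$ for which \eqref{m_coefficient} is transcendental is infinite.

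\textbf{Main obstacle.} The delicate step is the ``descent'' in the previous-to-last paragraph: extracting algebraicity of the individual coefficients $c_{\Phi_D}(n)$ from algebraicity of the paired sums $\sum_n a_f(-n)c_{\Phi_D}(mn)$ as $m$ varies. This requires (i) controlling that the pairing does not degenerate — this is exactly why one works with $m$ coprime to $\mathrm{Prin}(f)$, and why the hypothesis $(g,\xi_0(f))\neq 0$ for Hecke eigenforms $g\in S_2$ is invoked elsewhere to guarantee $f_{\Phi_D}$ has nonzero projection to the relevant eigenspaces — and (ii) handling the finitely many primes dividing $N\cdot\mathrm{Prin}(f)$, where the clean Hecke recursions fail and one must instead use the $U_p$/$V_p$ operators together with the finite-dimensionality of the space of weight-$2$ meromorphic forms with bounded pole divisor. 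Getting the bookkeeping of these bad primes to come out so that algebraicity for \emph{all} $n$ (not just $n$ coprime to the bad set) follows is the crux; everything else is a formal consequence of Scholl's theorem and Theorem A.
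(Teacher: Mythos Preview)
Your proposal has a genuine gap at exactly the place you flag as the ``main obstacle'': the descent from algebraicity of the sums $\sum_{n\geq 1} a_f(-n)c_{\Phi_D}(mn)$ to algebraicity of the individual coefficients $c_{\Phi_D}(n)$ does not go through as sketched. The support $A=\{n:a_f(-n)\neq 0\}$ need not contain $1$, and even when it does, for $m$ coprime to $\mathrm{Prin}(f)$ the terms $c_{\Phi_D}(mn)$ with $n\in A$, $n>1$, live at indices \emph{not} coprime to $\mathrm{Prin}(f)$, so no inductive isolation of a ``leading term'' is available. Your appeal to the Hecke recursion $c(p^{r+1})=a_p c(p^r)-p\,c(p^{r-1})$ is misplaced: $f_{\Phi_D}$ is a genuinely meromorphic weight-$2$ form with a simple pole at $Q_\tau\in Y_0(N)$, not a cusp form, and has no eigenform decomposition in $S_2(\Gamma_0(N))$; there is no $\psi_D^{alg}$ here because $\tau$ is not CM and $D$ is not defined over $\overline{\mathbb{Q}}$. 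For the same reason Scholl's theorem (Theorem \ref{algebraic_differential}) is inapplicable --- its hypothesis is that $D$ be defined over a number field. Finally, the hypothesis $(g,\xi_0(f))\neq 0$ you invoke is not part of the present lemma; it belongs to Lemma \ref{notalgebraic}.

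The paper avoids the descent entirely. It packages the sums as the Fourier coefficients of a single meromorphic form $F(z):=\sum_{n\in A} a_f(-n)\,(f_{\Phi_D}|U_n)(z)$, so that $a_F(m)$ is exactly \eqref{m_coefficient}, then sieves out the primes dividing $N\cdot\mathrm{Prin}(f)$ via $F\mapsto F-F|U_p|V_p$ to obtain a form $F_k$ whose nonzero Fourier coefficients occur only at indices coprime to $N\cdot\mathrm{Prin}(f)$. The crucial step is then purely geometric: one checks by an explicit calculation with the translates $f_{\Phi_D}\bigl(\tfrac{z}{n}+\tfrac{i}{n}+\tfrac{\mu_1}{p_1 n}+\cdots\bigr)$ that $F_k$ (and $F_k|U_p$ for every prime $p$) still has a simple pole at a specific non-CM point $\beta\in\mathbb{H}$, because exactly one summand is singular there. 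The Schneider--Siegel theorem then forces $F_k|U_p$ to have a transcendental Fourier coefficient for every $p$, giving infinitely many transcendental $a_F(m)$. The point is that one never needs to recover $c_{\Phi_D}(n)$ individually; tracking the pole of the combined object $F_k$ suffices.
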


\begin{proof}
Let $A$ be the set of positive integers $n$ such that $a_f(-n)\neq 0$.
Note that $A$ is a finite set.
Then, we define
\[
F(z) := \sum_{n\in A} a_f(-n)(f_{\Phi_D}|U_n)(z).
\]
The $m$-th Fourier coefficient $a_F(m)$ of $F$ at the cusp $i\infty$ is equal to (\ref{m_coefficient}).
Thus, we need to show that $F$ has infinitely many transcendental Fourier coefficients $a_F(m)$ with $(m,N\cdot \mathrm{Prin}(f))=1$.

Let $B = \{p_1,\ldots, p_k\}$ be the set of primes $p$ such that $p | N \cdot \mathrm{Prin}(f)$. By using the operators $U_n$ and $V_n$, we will remove Fourier coefficients $a_F(m)$ with $(m,N\cdot \mathrm{Prin}(f))\neq1$.
We define $F_0 := F$ and $F_{i+1} := F_i - F_i|U_{p_{i+1}}|V_{p_{i+1}}$ for $0\leq i \leq k-1$.
Then, it is enough to show that $F_k$ has infinitely many transcendental Fourier coefficients at the cusp $i\infty$.

By the definition of $f_{\Phi_D}$, we see that $f_{\Phi_D}$ has a singularity at the non-CM point $\tau$ in $\HH$.
We will prove that $F_k$ also has a singularity at a non-CM point in $\HH$.
We define
\[
f_{n,i, \mu_1, \ldots, \mu_k}(z) := f_{\Phi_D}\left( \frac{z}{n} + \frac{i}{n} + \frac{\mu_1}{p_1n} + \cdots +\frac{\mu_k}{p_kn}\right)
\]
for $n\in A, 0\leq i\leq n-1, 0\leq \mu_1\leq p_1-1, \ldots, 0\leq \mu_k\leq p_k-1$.
Note that
\begin{equation*}
\begin{aligned}
F_k(z) &= F(z) - (F|U_{p_1}|V_{p_1})(z) - \cdots -(F|U_{p_k}|V_{p_k})(z) + (F|U_{p_1}|V_{p_1}|U_{p_2}|V_{p_2})(z) \\
&\qquad +\cdots + (F| U_{p_{k-1}}|V_{p_{k-1}}|U_{p_k}|V_{p_k})(z) + \cdots + (-1)^k (F|U_{p_1}|V_{p_1}|\cdots|U_{p_k}|V_{p_k})(z)\\
&= \sum_{n\in A} \sum_{0\leq i\leq n-1} \sum_{0\leq \mu_j \leq p_j-1 \atop 1\leq j\leq k} \alpha_{n,i,\mu_1,\ldots, \mu_k} f_{n,i, \mu_1, \ldots, \mu_k}(z)
\end{aligned}
\end{equation*}
for some nonzero constants $\alpha_{n,i,\mu_1,\ldots, \mu_k}$.
We fix $n_0\in A$.
Let
\[
\beta := n_0\tau - \frac{1}{p_1} - \cdots - \frac{1}{p_k}.
\]
Then, $\beta$ is not a CM point since $\tau$ is not a CM point, and $\beta$ is a singular point of the function $f_{n_0,0,1,\ldots, 1}$.

To prove that $F_k$ has a singularity at $\beta$,
it is enough to show that $\beta$ is not a singular point of $f_{n,i, \mu_1, \ldots, \mu_k}$ if $(n,i,\mu_1,\ldots, \mu_k) \neq (n_0, 0, 1, \ldots, 1)$.
Note that the set of singular points of $f_{n,i, \mu_1, \ldots, \mu_k}$ is a subset of
\[
T_{n,i,\mu_1, \ldots, \mu_k}:= \left \{n(\gamma \tau) - i - \frac{\mu_1}{p_1} - \cdots -\frac{\mu_k}{p_k}\  \bigg|\ \gamma\in \Gamma_0(N)\right\}.
\]
Suppose that $\beta\in T_{n,i,\mu_1, \ldots, \mu_k}$.
Then, for some $\gamma = \sm a&b\\c&d\esm \in \Gamma_0(N)$, we have
\[
n_0\tau - \frac{1}{p_1} - \cdots - \frac{1}{p_k} = n(\gamma \tau) - i - \frac{\mu_1}{p_1} - \cdots -\frac{\mu_k}{p_k}.
\]
This implies that
\[
n_0 \tau - n\frac{a\tau+b}{c\tau+d} = -i - \frac{\mu_1-1}{p_1} - \cdots - \frac{\mu_k-1}{p_k}.
\]
If $c\neq 0$, then $\tau$ satisfies a quadratic equation; this is not possible since $\tau$ is not a CM point.
Thus, $c=0$. Then, we may assume that $a= d= 1$.
From this, we have
\[
n_0\tau - n(\tau+b) = -i - \frac{\mu_1-1}{p_1} - \cdots - \frac{\mu_k-1}{p_k}.
\]
Since $\tau$ is not a rational number, we see that $n = n_0$.
Then, we obtain
\[
-nb + i = - \frac{\mu_1-1}{p_1} - \cdots - \frac{\mu_k-1}{p_k}.
\]
This holds only if $\mu_1= \cdots = \mu_k = 1$ and $b = i  = 0$.
Thus, $\beta$ is a singular point only for $f_{n_0,0,1,\ldots, 1}$.

Let $p$ be a prime.
In a similar argument, we see that $F_k|U_p$ also has a singularity at a non-CM point for every prime $p$.
By the Siegel-Schneider theorem, $F_k|U_p$ is not defined over $\overline{\QQ}$.
Then, the $q$-expansion principle implies there is a Fourier coefficient of $F_k|U_p$ at the cusp $i\infty$ which is transcendental.
Therefore, $F_k$ has infinitely many transcendental Fourier coefficients.
\end{proof}

\begin{lem} \label{notalgebraic}
Let $f\in H^*_0(\Gamma_0(N))$ be an arithmetic Hecke eigenform.
Assume  that $(g, \xi_{0}(f)) \neq 0$ for every Hecke eigenform $g \in S_2(\Gamma_0(N))$.
Let $D$ be a divisor of $X_0(N)$ defined over $\overline{\QQ}$.
Assume that $N$ is a prime, and $\Phi_D$ is not defined over $\overline{\mathbb{Q}}$.
Then, there exist infinitely many positive integers  $m$ prime to $N$ such that $f(T_m.D)$ are transcendental.
\end{lem}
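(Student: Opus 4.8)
The plan is to reduce the statement, via Lemma \ref{algebraic}(2), to the transcendence of a single period attached to $f$ and $D$, and then to extract that transcendence from Scholl's theorem (in the form of \cite[Theorem 3.3]{BO}) together with the non-vanishing hypothesis on $\xi_0(f)$.

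First I would set up the reduction. By Lemma \ref{algebraic}(2) — whose proof goes through verbatim for any degree-zero divisor $D$ defined over $\overline{\mathbb Q}$, since by the Schneider--Siegel theorem the support of such a $D$ consists of CM points — for each $m$ prime to $N$ the number $f(T_m.D)$ is algebraic if and only if $(\xi_0(f|T_m), f_{F_D})$ is algebraic. As $\xi_0(f)$ is a Hecke eigenform, \eqref{Heckedifferentialcommutative} gives $\xi_0(f|T_m)=m^{-1}\lambda_m\,\xi_0(f)$, so $(\xi_0(f|T_m), f_{F_D})=m^{-1}\lambda_m\,(\xi_0(f), f_{F_D})$ with $m^{-1}\lambda_m\in\overline{\mathbb Q}$. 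Hence, once $(\xi_0(f), f_{F_D})$ is transcendental, $f(T_m.D)$ is transcendental for every $m$ prime to $N$ with $\lambda_m\neq 0$; and there are infinitely many such $m$, because $N$ is prime (so $\xi_0(f)$ is a scalar multiple of a normalized newform in $S_2(\Gamma_0(N))$) and a weight-$2$ newform has infinitely many nonzero Hecke eigenvalues $\lambda_p$ with $p\nmid N$. So it remains to prove that $(\xi_0(f), f_{F_D})\notin\overline{\mathbb Q}$.

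To do this I would argue as follows. Since $\Phi_D$ is not defined over $\overline{\mathbb Q}$, Theorem \ref{algebraic_differential} (equivalently \cite[Theorem 3.3]{BO}) shows that $f_{\Phi_D}$ has a transcendental Fourier coefficient at $i\infty$; since $f_{\psi_D^{alg}}$ has algebraic Fourier coefficients, the cusp form $f_{F_D}=f_{\Phi_D}-f_{\psi_D^{alg}}\in S_2(\Gamma_0(N))$ is nonzero and carries that transcendence. Applying Lemma \ref{regularized_formula} with $g=f_{F_D}$ (holomorphic, so the residue terms vanish) and using the arithmeticity of $f$ and the vanishing of $f_{F_D}$ at the cusps, one gets
\[
(\xi_0(f), f_{F_D})=\sum_{n\in A}a_f(-n)\,c_{F_D}(n),\qquad A:=\{n\ge 1:a_f(-n)\neq 0\},
\]
which differs by an algebraic number from $a_F(1)$, the first Fourier coefficient at $i\infty$ of the cusp form $F:=\sum_{n\in A}a_f(-n)\,(f_{\Phi_D}|U_n)$. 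Running the computation of $f(T_m.Q_\tau)$ from the proof of Lemma \ref{nonCM} shows moreover that, for $m$ prime to $N\,\mathrm{Prin}(f)$, the coefficient $a_F(m)$ equals $\lambda_m\,a_F(1)$ up to an algebraic summand; so, after deleting as in Lemma \ref{nonCM} the coefficients at indices not prime to $N\,\mathrm{Prin}(f)$ by the operators $U_p\,|\,V_p$, the resulting form is defined over $\overline{\mathbb Q}$ if and only if $a_F(1)\in\overline{\mathbb Q}$, if and only if $(\xi_0(f), f_{F_D})\in\overline{\mathbb Q}$. Thus, by the $q$-expansion principle, it is enough to show that this form is not defined over $\overline{\mathbb Q}$, i.e. that the transcendental content of $f_{\Phi_D}$ is not killed when $f_{\Phi_D}$ is paired against the principal part of $f$. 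This is exactly where the hypothesis $(g,\xi_0(f))\neq 0$ for every Hecke eigenform $g\in S_2(\Gamma_0(N))$ enters: it guarantees precisely this non-cancellation, so that $F\notin\overline{\mathbb Q}$ and hence $(\xi_0(f), f_{F_D})$ is transcendental.

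I expect the last step to be the main obstacle. In Lemma \ref{nonCM} the auxiliary form had a genuine singularity at a non-CM point of $\mathbb H$, so transcendence was immediate from the Schneider--Siegel theorem; here all singularities sit at CM points, and the transcendence must instead be imported from Scholl's theorem and then shown to survive the passage to the finite combination $F$ cut out by the principal part of $f$ and by the choice of $\psi_D^{alg}$. Controlling that non-cancellation — keeping alive the component of $f_{F_D}$ that $\xi_0(f)$ detects — is what the hypothesis $(g,\xi_0(f))\neq 0$ is designed for, and turning it into a rigorous argument is the delicate part of the proof.
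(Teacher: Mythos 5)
Your reduction step is fine as far as it goes, but the heart of the lemma is exactly the claim you do not prove: that the period $\left(\xi_0(f), f_{F_D}\right)$ (equivalently, that the combination of Fourier coefficients of $f_{\Phi_D}$ cut out by the principal part of $f$) is transcendental. You assert that the hypothesis $(g,\xi_{0}(f))\neq 0$ ``guarantees precisely this non-cancellation'' and acknowledge that turning this into an argument is the delicate part --- but that delicate part \emph{is} the lemma. Your proposed route (show the pruned form $F_k$ is not defined over $\overline{\mathbb{Q}}$ and then invoke the $q$-expansion principle) has no engine to run on here: in Lemma \ref{nonCM} non-algebraicity came from a singularity at a non-CM point via Schneider--Siegel, whereas now all singularities of $f_{\Phi_D}$ lie over CM points, and Scholl's theorem only tells you that \emph{some} coefficient of $f_{\Phi_D}$ is transcendental --- it does not prevent the particular linear combination $\sum_{n}a_f(-n)c_{\Phi_D}(mn)$ from being algebraic for the relevant $m$. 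So the proposal has a genuine gap at its central step.

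For comparison, the paper closes this gap by a different mechanism, which does not reduce to a single scalar period. Write $f_{F_D}=\sum_i\beta_i f_i$ and $\xi_0(f)=\sum_i\alpha_i f_i$ over the normalized eigenforms; the hypothesis gives $\alpha_i\neq0$ for all $i$, and $\alpha_i(f_i,f_i)=(\xi_0(f),f_i)$ is algebraic and nonzero, so with $\tilde\beta_i:=\alpha_i\overline{\beta_i}(f_i,f_i)$ one has
\[
(\xi_0(f|T_m), f_{F_D})=m^{-1}\sum_{i}\tilde\beta_i\lambda_{i,m},
\]
and at least one $\tilde\beta_i$ is transcendental because $\Phi_D$ is not defined over $\overline{\mathbb{Q}}$. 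Expanding the $\tilde\beta_i$ over a $\overline{\mathbb{Q}}$-basis $\{1,w_2,\dots,w_\ell\}$ of their span, algebraicity of $f(T_m.D)$ is equivalent to $\sum_i\beta_{i,j}\lambda_{i,m}=0$ for all $j\geq2$; if this held for all $m$ prime to $N$, the nonzero cusp form $g=\sum_i\beta_{i,j_0}f_i$ would have $a_g(m)=0$ for all $m$ prime to $N$, which is impossible since $g|U_N\in S_2(\SL_2(\ZZ))=\{0\}$ by Atkin--Lehner, and the infinitude of good $m$ comes from $g-g|U_N|V_N$ being a nonzero cusp form. Note also that your identity $\xi_0(f|T_m)=m^{-1}\lambda_m\,\xi_0(f)$ sits uneasily with the role of the hypothesis: the assumption $(g,\xi_0(f))\neq0$ for \emph{every} eigenform is only meaningful when $\xi_0(f)$ has nonzero components along several eigenforms, and in that situation $\xi_0(f)|T_m$ is not a scalar multiple of $\xi_0(f)$; it is precisely the independence of the eigenvalue systems $\lambda_{i,m}$ across components, together with $\alpha_i\neq0$ for all $i$, that prevents the cancellation you need to rule out.
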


\begin{proof}
Let $\{ f_1, \ldots, f_k \}$ be the set of all normalized Hecke eigenforms in $S_2(\Gamma_0(N))$.
Assume that $f_i$ has a Fourier expansion of the form
\[
f_i(z) = \sum_{n=1}^\infty a_{f_i}(n) e^{2\pi inz}.
\]
Then, we have
\[
( \xi_0(f), f_i)=\sum_{m+n=0} a_{f_i}(m) a_f(n),
\]
where $a_{f}(n)$ is the $n$-th Fourier coefficient of $f^+$ at the cusp $i\infty$.
Thus, $(\xi_0(f), f_i)$ is algebraic

Note that $f_{F_D}$ is a cusp form in $S_2(\Gamma_0(N))$.
Assume that $f_{F_D}=\sum_{i=1}^k \beta_i f_i$ for some $\beta_i$, and that $\xi_0(f) = \sum_{i=1}^k \alpha_i f_i$ for some $\alpha_i$.
By the assumption, $\alpha_i \neq 0$ for all $i$.
Let $m$ be a positive integer prime to $N$.
Then, we have
\[
\xi_0(f|T_m) = m^{-1} \xi_0(f)|T_m = m^{-1} \sum_{i=1}^k \alpha_i \lambda_{i,m}f_i,
\]
where $\lambda_{i,m}$ is the eigenvalue of $f_i$ for $T_m$.
From this, we obtain
\begin{equation*}
\begin{aligned}
(\xi_0(f|T_m), f_{F_D}) &= m^{-1} \sum_{i=1}^k \sum_{j=1}^k (\alpha_i\lambda_{i,m}f_i, \beta_j f_j) = m^{-1}\sum_{i=1}^k \alpha_i \overline{\beta_i} \lambda_{i,m} (f_i, f_i).
\end{aligned}
\end{equation*}
We define
\[
\tilde{\beta}_i := \alpha_i \overline{\beta_i} (f_i, f_i)
\]
for $i=1, \ldots, k$.

Since $\Phi_D$ is not defined over $\overline{\QQ}$, we see that $F_D$ is not defined over $\overline{\mathbb{Q}}$.
This implies that at least one of $\beta_i$ is transcendental.
Note that $\alpha_i (f_i, f_i)$ is a non-zero  algebraic number for all $i$ since  $\alpha_i (f_i, f_i) = (\xi_0(f), f_i)$ and $\alpha_i \neq0$ for all $i$. Thus, at least one of $\tilde{\beta}_i$ is transcendental. As a vector space over $\overline{\mathbb{Q}}$, let $W$ be the subspace of $\mathbb{C}$ generated by $\tilde{\beta}_1,\ldots, \tilde{\beta}_k$.
Let $\{w_1, \ldots, w_\ell \}$ be a basis of $W$, where $w_1=1$.
We may assume that
\begin{equation} \label{beta_equation}
\tilde{\beta}_i=\sum_{j=1}^\ell \beta_{ij}w_j
\end{equation}
for $i=1,\ldots, k$ and for some $\beta_{ij} \in \overline{\mathbb{Q}}$.
From this, we have
\begin{equation} \label{eigenvalue_equation}
(\xi_0(f|T_m), f_{F_D})=  m^{-1}\sum_{i=1}^k \tilde{\beta}_i \lambda_{i,m} = m^{-1}\sum_{i=1}^k \left(\sum_{j=1}^\ell \beta_{i,j}w_j \right) \lambda_{i,m} = m^{-1} \sum_{j=1}^\ell \left( \sum_{i=1}^k \beta_{i,j} \lambda_{i,m}\right) w_j.
\end{equation}
By Lemma \ref{algebraic} (2) and (\ref{eigenvalue_equation}), $f(T_m.D)$ is algebraic if and only if
\begin{equation} \label{linear_equation}
\sum_{i=1}^k   \beta_{i,j}\lambda_{i,m}=0
\end{equation}
for every $j \geq 2$.

Suppose that $f(T_m.D)$ is algebraic for all positive integers $m$ prime to $N$.
Since at least one of $\tilde{\beta}_i$ is transcendental, at least one of $\beta_{i,j}$ with $j\geq2$ is non-zero  by (\ref{beta_equation}).
From this, there is a positive integer $j_0\geq 2$ such that $(\beta_{1, j_0}, \ldots, \beta_{\ell, j_0}) \neq (0,\ldots, 0)$.
We define a cusp form $g$ in $S_2(\Gamma_0(N))$ by
\begin{equation} \label{dfn_g}
g := \sum_{i=1}^k \beta_{i,j_0}f_i.
\end{equation}
Note that by (\ref{linear_equation}), $a_g(m) = 0$ for all positive integers $m$ prime to $N$, where $a_g(m)$ denotes the $m$-th Fourier coefficient of $g$.
This implies that
\[
g|U_N|V_N = g \in S_{2}(\Gamma_0(N))
\]
since $N$ is a prime.
By Lemma 16 in \cite{AL}, $g|U_N$ is a cusp form in $S_2(\Gamma_0(1))$.
Since $S_2(\Gamma_0(1)) = \{0\}$, this implies that $g| U_N = 0$.
From this, we have $g = 0$ since $a_g(m) \neq 0$ only when $N \mid m$.
This is a contradiction due to the fact that $\{f_1, \ldots, f_k\}$ is a basis of $S_2(\Gamma_0(N))$ and  $(\beta_{1, j_0}, \ldots, \beta_{\ell, j_0}) \neq (0,\ldots, 0)$. Thus, there is a positive integer $m_0$ prime to $N$ such that $f(T_{m_0}.D)$ is transcendental.

By (\ref{linear_equation}),
there exists $j_0\geq 2$ such that $\sum_{i=1}^k \beta_{i,j_0}\lambda_{i,m_0}\neq 0$.
If we define $g$ by (\ref{dfn_g}), then
$a_g(m_0) \neq 0$. Thus, the function
\[
(g - g|U_N|V_N)(z) = \sum_{N\nmid m} a_g(m)e^{2\pi imz}
\]
is a non-zero cusp form of weight $2$.
Therefore, there exist infinitely many positive integer $m$ prime to $N$ such that $a_g(m)\neq0$.
With  (\ref{linear_equation}), this completes the proof.
\end{proof}

Now we prove Theorem \ref{main1}.

\begin{proof}[Proof of Theorem \ref{main1}]
First, we prove that if $\tau$ is not a CM point or $nD$ is not rational on $X_0(N)$ for any positive integer $n$, then $f(T_m.Q_\tau)$ is not algebraic for some positive integer $m$ prime to $N$.
Suppose that $\tau$ is not a CM point.
Then, by Lemma \ref{nonCM}, there exists a positive integer $m$ prime to $N\cdot \mathrm{Prin}(f)$ such that
(\ref{m_coefficient}) is transcendental.
Since we have
\[
f(T_m . Q_\tau) = - \sum_{n\geq1} a_f(-n)c_{\Phi_D}(mn)
\]
for positive integers $m$ prime to $N \cdot \mathrm{Prin}(f)$
by Lemma \ref{regularized_formula} and Proposition \ref{regpetersson}, we see that $f(T_m . Q_{\tau})$ is transcendental for some positive integer $m$ prime to $N$.

Suppose that $\tau$ is a CM point and that $nD$ is not rational on $X_0(N)$ for any positive integer $n$.
This implies that $D$ is defined over $\overline{\QQ}$ and that
$D$ is not a principal divisor.
By Theorem \ref{algebraic_differential}, $\Phi_D$ is not defined over $\overline{\QQ}$.
Therefore, there exists a positive integer $m$ prime to $N$ such that $f(T_m. Q_\tau)$ is not algebraic by Lemma \ref{notalgebraic}.

Conversely, suppose that $\tau$ is a CM point and $nD$ is rational on $X_0(N)$ for some positive integer $n$.
Then, $D$ is defined over $\overline{\QQ}$ and $D$ is a principal divisor.
By Theorem \ref{algebraic_differential}, $\Phi_D$ is defined over $\overline{\QQ}$. Thus, all the Fourier coefficients of $f_{F_D}$ at the cusp $i\infty$  is algebraic.
Let $m$ be a positive integer prime to $N$.
Then, by Lemma \ref{regularized_formula}, we have
\[
(\xi_{0}(f|T_m), f_{F_D}) = \sum_{n\geq1} a_{f|T_m}(-n) c_{F_D}(n),
\]
where $a_{f|T_m}(n)$ (resp. $c_{F_D}(n))$ is the $n$-th Fourier coefficient of $(f|T_m)^+$ (resp. $f_{F_D}$) at the cusp $i\infty$.
This implies that $(\xi_{0}(f|T_m), f_{F_D})$ is algebraic.
Therefore, by Lemma \ref{algebraic} (2), $f(T_m.Q_\tau)$ is algebraic.
\end{proof}


\end{document}